 \theoremstyle{plain}
\newtheorem{theorem}{Theorem}[section]
\newtheorem{lemma}[theorem]{Lemma}
\newtheorem{corollary}[theorem]{Corollary}
\newtheorem{prop}[theorem]{Proposition}
\theoremstyle{definition}
\newtheorem{definition}[theorem]{Definition}
\newtheorem{example}[theorem]{Example}
\newcommand{\abs}[1]{\lvert#1\rvert}
\begin{document}

\title{Isometric Isomorphisms of the annihilator of $C_0(G)$ in $LUC(G)^*$}

\keywords{Locally compact group. F-space. $LUC$-compactification. Stone--\v{C}ech compactification. Homomorphism. Isometric isomorphism. Multiplier.}

\subjclass[2000]{43A22, 22A15.}

\author[S. Zadeh]{Safoura Zadeh}

\address{Department of Mathematics\\
342 Machray Hall, 186 Dysart Road\\
University of Manitoba\\
Winnipeg, MB R3T 2N2\\
 Canada.
} 
\email{\text{jsafoora@gmail.com}}

\begin{abstract}
Let $LUC(G)$ denote the $C^*$-algebra of left uniformly continuous functions with the uniform norm and let $C_0(G)^{\perp}$ denote the annihilator of $C_0(G)$ in $LUC(G)^*$. In this article, among other results, we show that if $G$ is a locally compact group and $H$ is a discrete group then whenever there exists a weak-star continuous isometric isomorphism between $C_0(G)^{\perp}$ and $C_0(H)^{\perp}$, $G$ is isomorphic to $H$ as a topological group. In particular, when $H$ is discrete $C_0(H)^{\perp}$ determines $H$ within the class of locally compact topological groups.
\end{abstract}
\maketitle

\begin{section}{Introduction and preliminaries}
In \cite{MR0049910} Wendel showed that two locally compact groups are isomorphic if their group algebras are isometrically isomorphic. Similar results for different algebras have since been proved. Johnson \cite{MR0160846}  proved that the same conclusion holds when the group algebra is replaced by a measure algebra. Lau and McKennon in \cite{MR560583} generalized Johnson's result. They gave a direct proof to show that as a Banach algebra with its left Arens product, the dual of a left introverted subspace of $C_b(G)$ that contains $C_0(G)$ determines $G$. In particular, it can be concluded that the dual of the left uniformly continuous functions on a locally compact group $G$, $LUC(G)^*$, determines $G$. Their result also implies Johnson's aforementioned result. In \cite{MR1005079} Ghahramani, Lau and Losert showed that the second dual of the group algebra also determines the underlying group. Recently, Dales, Lau and Strauss \cite{MR2920625} proved that $M(G)^{**}$, determines the locally compact group $G$. In this article among other results we prove that when $G$ is discrete $C_0(G)^{\perp}$, the annihilator of $C_0(G)$ in $LUC(G)^*$, determines $G$ as a locally compact group.\\

Let $G$ be a Hausdorff locally compact group and $C_b(G)$ denote the Banach space of bounded continuous functions on $G$ with the uniform norm. An element $f$ in $C_b(G)$ is called left uniformly continuous if $g\mapsto l_gf$, where $l_gf(x)=f(gx)$, is a continuous map from $G$ to $C_b(G)$. Let $LUC(G)$ denote the Banach space of left uniformly continuous functions with the uniform norm. The dual space of $LUC(G)$, $LUC(G)^*$, with the following Arens-type product forms a Banach algebra

\begin{center}
$ \langle m.n,f\rangle=\langle m,nf\rangle\ \ \ {\rm and} \ \ \   nf(g) = \langle n,l_g(f)\rangle$
\end{center}
\noindent
where $m,n\in LUC(G)^*$, $f\in LUC(G)$ and $g\in G.$ The measure algebra $M(G)$ can be identified with a subalgebra of $LUC(G)^*$ via 
$$\langle \mu,f\rangle=\int_G f\ d\mu\ \ \ \left(\mu\in M(G),\ f\in LUC(G)\right).$$
In fact we have the $L^1$- direct sum
$$LUC(G)^*=M(G)\oplus_1 C_0(G)^{\perp}$$
 where
$$C_0(G)^{\perp}:=\{m\in LUC(G)^*: \langle m,f\rangle=0,\ \forall f\in C_0(G)\}.$$
Moreover, $C_0(G)^{\perp}$ is a weak-star closed ideal in $LUC(G)^*$ (see \cite[Lemma 1.1]{MR1005079}). One can easily check that for each $n\in LUC(G)^*$ and $\mu\in M(G)$ the mappings $m\mapsto m\cdot n$ and $n\mapsto \mu\cdot n$ are weak-star continuous. In fact, $M(G)$ is the largest subset of elements $m$ in $LUC(G)^*$ for which the left multiplication mapping $n\mapsto m\cdot n,\ m\in LUC(G)^*$ is continuous (see \cite{MR817669}).\\

The left uniformly continuous compactification of $G$, $G^{luc}$, is the Gelfand spectrum of the unital commutative $C^*$-  algebra $LUC(G)$, that is
$$G^{luc}:=\{m\in LUC(G)^*\setminus\{0\};\langle m,fg\rangle=\langle m,f\rangle\langle m,g\rangle\ \forall f,g\in LUC(G)\}.$$
It can be shown that $G^{luc}$ is in fact a weak-star compact semigroup with the Arens multiplication and weak-star topology it inherits from $LUC(G)^*$. When $G$ is a discrete group, $LUC(G)=l^{\infty}(G)=C(\beta G)$ and so the $LUC$-compactification of $G$ is the same as its Stone-\v{C}ech compactification. We identify $G$ with its image in $G^{luc}$. The corona of the $LUC$-compactification of $G$, $G^{luc}\setminus G$, is denoted by $G^*$ and is a closed ideal of the compact semigroup $G^{luc}$. An element $z\in G^*$ is called right cancellable if for each $m,n\in G^{luc}$, $mz=nz$ implies that $m=n$. In \cite[Thm. 1]{MR1935082}, it is proved that the $LUC$- compactification contains many right cancellable elements.\\

Let $G$ and $H$ be locally compact groups and $\mathbb{T}$ be the circle group. Suppose that $\alpha:G\to\mathbb{T}$ is a continuous character and $\psi:G\to H$ is a continuous homomorphism.  Then it is easy to see that 
$$j_{\alpha,\psi}(f):=\alpha\cdot f\circ \psi$$
maps $C_0(H)$ into $C_0(G)$ and the dual mapping
$$j_{\alpha,\psi}^*:M(G)\to M(H)$$
is a homomorphism. When $\psi$ is an isomorphism $j_{\alpha,\psi}^*:M(G)\to M(H)$ is a weak-star continuous isometric isomorphism. It follows from \cite{MR0160846} that every isometric isomorphism $T:M(G)\to M(H)$ is of the form $T=j_{\alpha,\psi}^*$, for some character $\alpha:G\to\mathbb{T}$ and isomorphism $\psi:G\to H$, and therefore is weak-star continuous. Similarly, if $\alpha:G\to\mathbb{T}$ is a continuous character and $\psi:G\to H$ is a continuous homomorphism, then it is easy to see that 
$$j_{\alpha,\psi}(f):=\alpha\cdot f\circ\psi$$ maps $LUC(H)$ into $LUC(G)$ and that the dual map $$j_{\alpha,\psi}^*:LUC(G)^*\to LUC(H)^*$$ is a homomorphism. When $\psi$ is a topological isomorphism $j_{\alpha,\psi}^*:LUC(G)^*\to LUC(H)^*$ is a weak-star continuous isometric isomorphism. Moreover, every weak-star continuous isometric isomorphism $T:LUC(G)^*\to LUC(H)^*$ also takes this canonical form, but it is not clear if every isometric isomorphism $T:LUC(G)^*\to LUC(H)^*$ is weak-star continuous. \\
\indent Suppose that $\psi:G\to H$  is a continuous homomorphism. Letting $\tilde{\psi}$ denote the restriction of $j_{1,\psi}^*$ to $G^{luc}$, $\tilde{\psi}:G^{luc}\to H^{luc}$ is the unique continuous homomorphism extending $\psi$. We note that when $\psi$ is a topological isomorphism, $\tilde{\psi}$ is a topological isomorphism of $G^{luc}$ onto $H^{luc}$.\\

F-spaces were studied in detail by L. Gillman and M. Henriksen in 1956 \cite{MR0078980} as the class of spaces for which $C(X)$ is a ring in which every finitely generated ideal is a principal ideal. Several conditions both topological and algebraic were proved equivalent for a space to be an F-space (see  \cite{MR0407579}). We choose the following characterization as our definition for an F-space.
\begin{definition}
A completely regular space $X$ is an F-space if for any continuous bounded function $f$ on $X$ there is a continuous bounded function $k$ on $X$ such that $f=k \abs{f}.$
\end{definition}
Many of the proofs in the case of discrete groups use the fact that for a discrete space the Stone-\v{C}ech compactification and its corona are F-spaces (see for example \cite{MR1761431}, \cite{MR1429649}, \cite{MR1967820}, \cite{MR1391957}, \cite{MR1297310} and \cite{MR2893605}). This is especially useful due to the following lemma. A proof is given in \cite[Lemma 1.1]{MR1297310}.

\begin{lemma}\label{FS}
If $X$ is a compact space then $X$ is an F-space if and only if for $\sigma-$compact subsets $A$ and $B$ of $X$, $\bar{A}\cap B=\emptyset$ and $A\cap\bar{B}=\emptyset$ implies that $\bar{A}\cap\bar{B}=\emptyset.$
\end{lemma}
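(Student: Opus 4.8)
The plan is to prove both implications by reducing them to the defining property $f=k\abs{f}$ of an F-space, combined with Urysohn's lemma (available because a compact Hausdorff space is normal) and the standard device of manufacturing a single continuous function out of a sequence of compact sets.

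For the forward implication, suppose $X$ is an F-space and let $A=\bigcup_n A_n$, $B=\bigcup_n B_n$ be $\sigma$-compact with $\bar A\cap B=\emptyset=A\cap\bar B$; replacing $A_n$ by $A_1\cup\cdots\cup A_n$ (and similarly for $B_n$) we may assume these sequences increase. For each $n$, Urysohn's lemma applied to the disjoint closed sets $A_n$ and $\bar B$ (disjoint because $A_n\subseteq A$ and $A\cap\bar B=\emptyset$) yields $g_n\in C(X,[0,1])$ with $g_n\equiv 1$ on $A_n$ and $g_n\equiv 0$ on $\bar B$; symmetrically choose $h_n\in C(X,[0,1])$ equal to $1$ on $B_n$ and to $0$ on $\bar A$. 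Put $g=\sum_n 2^{-n}g_n$, $h=\sum_n 2^{-n}h_n$ — the series converge uniformly, so $g,h\in C(X)$ — and set $f=g-h$. Then $f>0$ on $A$ and $f<0$ on $B$. Writing $f=k\abs{f}$ with $k$ continuous, at any point where $f>0$ we must have $k=f/\abs{f}=1$, so $k\equiv 1$ on $A$ and hence, by continuity, on $\bar A$; likewise $k\equiv -1$ on $\bar B$. As the closed sets $\{k=1\}$ and $\{k=-1\}$ are disjoint, $\bar A\cap\bar B=\emptyset$.

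For the converse, assume the stated separation property and let $f\in C(X)$; since $X$ is compact, $f$ is automatically bounded. If $f$ is real-valued, set $A=\{f>0\}$ and $B=\{f<0\}$: these are cozero-sets, hence $\sigma$-compact in the compact space $X$ (for instance $A=\bigcup_n\{f\ge 1/n\}$), and $\bar A\subseteq\{f\ge 0\}$ is disjoint from $B$ while $\bar B\subseteq\{f\le 0\}$ is disjoint from $A$. By hypothesis $\bar A\cap\bar B=\emptyset$, so Urysohn's lemma provides $k\in C(X,[-1,1])$ with $k\equiv 1$ on $\bar A$ and $k\equiv -1$ on $\bar B$, and the three pointwise cases $f(x)>0$, $f(x)<0$, $f(x)=0$ show that $f=k\abs{f}$. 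For complex-valued $f$ one instead verifies that the cozero-set $U=\{f\ne 0\}$ is $C^*$-embedded in $X$ — applying the hypothesis to a pair of disjoint zero-sets of $U$, each of which is closed in the $\sigma$-compact set $U$, hence itself $\sigma$-compact, and whose closure in $X$ meets $U$ in exactly itself, so that the disjointness hypotheses hold — and then extends $f/\abs{f}\colon U\to\mathbb T$ to the required $k$.

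The bookkeeping points (making the $A_n,B_n$ increasing, uniform convergence of the series, the three-case pointwise check) are routine. The one genuinely delicate step is the complex-valued case of the converse: the tidiest route is through the $C^*$-embedding of cozero-sets, and the care needed is exactly in observing that a zero-set of $U$ is $\sigma$-compact and that its closure in $X$ meets $U$ in precisely itself, so that the separation hypothesis can be invoked. If one takes the F-space definition to refer only to real-valued functions (as do the sources cited above), this difficulty does not arise.
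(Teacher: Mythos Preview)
Your argument is correct and follows the standard route to this well-known characterization of compact F-spaces. The paper itself does not supply a proof of this lemma; it simply records the statement and cites \cite[Lemma~1.1]{MR1297310}, so there is no in-paper argument to compare against. Your forward implication (build $f=g-h$ from Urysohn functions, invoke $f=k\abs{f}$, read off $k\equiv\pm1$ on $\bar A,\bar B$) and your real-valued converse (separate $\{f>0\}$ and $\{f<0\}$, apply the hypothesis, Urysohn again) are exactly the expected ones, and the complex-valued addendum via $C^*$-embedding of the cozero-set $\{f\neq0\}$ is the right way to close that case if one insists on complex scalars; as you note, the cited sources work with real-valued $C(X)$, so for the purposes of this paper the real case suffices.
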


In the first section of this article we prove that for a locally compact non-discrete group neither the $LUC$-compactification  nor its corona is an F-space. So in fact, for locally compact groups the corona of the $LUC$-compactification is an F-space if and only if the group is discrete. This shows that some interesting facts concerning Stone--\v{C}ech compactifications of discrete groups cannot possibly be generalized to $LUC$-compactifications of  general locally compact groups following the same line of proof. We also show that the corona of the $LUC$-compactification of a locally compact group does not contain $P$-points. A definition of $P$-points and a short discussion about their importance is given in the next section. In Section 3 we prove that a discrete group $G$ is completely determined within the class of all locally compact groups by both $C_0(G)^{\perp}$ and $G^*$. Some related results are also obtained.
\end{section}
\begin{section}{When is $G^*$ an F-space?}
Our main goal in this section is to show that if $G$ is a locally compact non-discrete group then neither $G^{luc}$ nor $G^*$ are F-spaces. Therefore $G^{luc}$ ($G^*$) is an F-space if and only if $G$ is discrete. This result will be applied in Section 3.\\

In the proof of Theorem \ref{F-space} we make use of absolutely convergent series with alternating partial sums. An example of such a series is given below.

\begin{example}\label{al}
Let $\sum_{n=1}^{\infty} b_n$ be a convergent series with positive terms and let
\begin{align*}
a_1&:=b_1\\
 a_2&:= -b_1-b_2\\
a_3&:= b_2+b_3\\
a_4&:=-b_3-b_4\\
\dots\\
a_{2k}&:=-b_{2k-1}-b_{2k}\\
a_{2k+1}&:=b_{2k}+b_{2k+1}\\
\dots.
\end{align*}

\noindent Then the series $\sum_{n=1}^{\infty} a_n$ is an example of an absolutely convergent series whose partial sums have alternating sign.
\end{example}
\begin{theorem}\label{F-space}
If $X$ is a Hausdorff locally compact non-discrete topological space with a non-trivial convergent sequence, then $X$ is not an F-space.
\end{theorem}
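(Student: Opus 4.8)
The plan is to refute the F-space property by exhibiting a bounded continuous function whose pointwise sign cannot be realised by a continuous function, and the quickest route is to push the problem onto a compactification and apply Lemma~\ref{FS}. First I would extract from the hypothesis a non-trivial convergent sequence, i.e.\ pairwise distinct points $x_1,x_2,\dots$ with $x_n\to x$ and $x_n\neq x$ for every $n$. Since $X$ is Hausdorff, a short argument shows that the only accumulation point of $\{x_n:n\in\mathbb N\}$ in $X$ is $x$, so for any $A\subseteq\{x_n:n\}$ one has $\overline{A}\subseteq A\cup\{x\}$ with closure taken in $X$; in particular the compact subspace $\{x_n:n\}\cup\{x\}$ is a copy of $\omega+1$. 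I would also record that $X$ is completely regular (being locally compact Hausdorff), so $C_b(X)=C(\beta X)$, and a routine density argument shows that $X$ is an F-space precisely when $\beta X$ is.

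Next I would apply Lemma~\ref{FS} to the compact space $\beta X$ with $A:=\{x_{2n}:n\}$ and $B:=\{x_{2n-1}:n\}$, two disjoint countable --- hence $\sigma$-compact --- subsets of $\beta X$. Because closure in the subspace $X$ is the trace on $X$ of closure in $\beta X$, the first paragraph gives $\overline{A}^{\beta X}\cap X\subseteq A\cup\{x\}$ and symmetrically for $B$; since $A$ and $B$ are disjoint and $x$ lies in neither, this forces $\overline{A}^{\beta X}\cap B=\emptyset=A\cap\overline{B}^{\beta X}$. On the other hand every neighbourhood of $x$ eventually contains both even- and odd-indexed terms of the sequence, so $x\in\overline{A}^{\beta X}\cap\overline{B}^{\beta X}$. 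Hence $\beta X$ fails the criterion of Lemma~\ref{FS}, so $\beta X$ is not an F-space, and therefore neither is $X$.

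In this slick form the only subtle points are the reduction of the F-space property from $X$ to $\beta X$ and the correct bookkeeping of closures relative to $\beta X$ versus $X$, both routine once stated carefully; there is no serious obstacle. A more self-contained argument --- the one for which the alternating partial sums of Example~\ref{al} are tailor-made --- avoids $\beta X$ by an explicit construction: one uses local compactness to build a ``staircase'' of neighbourhoods of $x$ (equivalently, a locally finite family of relatively compact open sets $U_n\ni x_n$ with $x\notin\overline{U_n}$) and complete regularity to mount bumps $\varphi_n$ on them, producing $f\in C_b(X)$ with $f(x_n)=a_n$ and $f(x)=0$, where $\sum a_n$ is absolutely convergent with partial sums of alternating sign (so $a_n\to0$). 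If $f=k\abs{f}$ for some $k\in C_b(X)$, then $k(x_n)=a_n/\abs{a_n}$ alternates in sign while $x_n\to x$, so $k$ is discontinuous at $x$, a contradiction. Here the genuine work is the globalisation --- checking that $f$, or the family $\{U_n\}$, is well-behaved on all of $X$ --- which is exactly where local compactness is used; and it cannot be cheaply dispensed with, since $\beta\mathbb N$ is a locally compact, non-discrete F-space that merely lacks a non-trivial convergent sequence.
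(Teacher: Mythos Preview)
Your primary argument via $\beta X$ and Lemma~\ref{FS} is correct and genuinely different from the paper's proof. The paper works directly in $X$: it builds a nested family of compact neighbourhoods $K_n$ of the limit point with $x_1,\dots,x_n\notin K_{n+1}$ and $x_{n+1},x_{n+2},\dots\in K_{n+1}$, then pastes Urysohn bumps $f_n$ supported in $K_n$ and constant $a_{n-1}$ on $K_{n+1}\cup\{x_n\}$, so that $f=\sum_n f_n$ satisfies $f(x_n)=\sum_{m\le n-1}a_m$; the alternating partial sums force any $k$ with $f=k|f|$ to oscillate between $\pm1$ along $(x_n)$. Your route is shorter and more conceptual: the identification $C_b(X)\cong C(\beta X)$ transports the F-space condition to $\beta X$, and the even/odd split of the sequence gives an immediate counterexample to the $\sigma$-compact separation criterion of Lemma~\ref{FS}. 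The trade-off is that the paper's argument is self-contained (it does not invoke Lemma~\ref{FS} or the transfer to $\beta X$), while yours imports those two facts but avoids all the neighbourhood bookkeeping.

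A small caution about your sketched second approach: a family $\{U_n\}$ with $x_n\in U_n$ and $x\notin\overline{U_n}$ cannot be locally finite at $x$, so continuity of $f=\sum a_n\varphi_n$ at $x$ is not automatic from local finiteness; one really needs the nested-neighbourhood (telescoping) device the paper uses, which is why the values come out as partial sums $\sum_{m\le n}a_m$ rather than the individual $a_n$. You flag ``globalisation'' as the genuine work, and that is exactly right, but the specific mechanism you name does not quite do it.
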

\begin{proof}
Suppose that $X$ contains a non-trivial sequence $(x_n)$ convergent to a (non-isolated) point  $x_0$ in $X$.  Without loss of generality assume that $x_n\neq x_0$ for each $n\in\mathbb{N}$. Since $x_0$ is not isolated and $X$ is locally compact we can inductively construct a nested family $\{K_n\}_{n\geq 2}$ of compact neighbourhoods around $x_0$ such that $x_1,x_2,..., x_n\notin K_{n+1}$ and $x_{(n+1)},x_{(n+2)},...\in K_{n+1}$, for each $n$. To see this note that since $x_1\neq x_0$ there is a precompact open set $U_0$ such that $\{x_2,x_3,...\}\cup \{x_0\} \in U_0$ and $x_1\not\in U_0$. Since $X$ is a locally compact space for  the compact set $\{x_0\}$ and the open set $U_0$ there is a compact set $K_2$ such that $\{x_0\}\subseteq K_2^{\circ}\subseteq K_2\subseteq U_0$, where $K_2^{\circ}$ denotes the interior of the set $K_2$. Without loss of generality we can assume that $(x_n)_{n\geq 2}\in K_2$. Suppose that the compact set $K_n$ is given such that $x_1,...,x_{n-1}\not\in K_n$. Since $x_1,...,x_n\neq x_0$ there is a precompact open set $U_n$ such that $\{x_{n+1},x_{n+2},...\}\cup\{x_0\}\subseteq U_n\subseteq K_n$ and $x_1,...,x_n\not\in U_n$.  Since $X$ is locally compact there is a compact set $K_{n+1}$ such that $\{x_0\}\subseteq K_{n+1}^{\circ}\subseteq K_{n+1}\subseteq U_n$ and note that $x_1,...,x_n\not\in K_{n+1}$. By discarding some elements of our sequence if necessary we can assume that $x_{(n+1)},x_{(n+2)},...\in K_{n+1}$.\\
\indent Consider an absolutely convergent series $\sum_{n=1}^{\infty}a_n$, with alternating partial sums as in Example \ref{al}. Using Urysohn's lemma for locally compact spaces, for $n\geq 2$ we define the compactly supported function $f_n$ such that  $f_n(X\setminus K_n)=0$ and $f_n(k_{n+1}\cup \{x_n\})=a_{n-1}$, for $n\geq 2$. Let $f:=\sum_{n\geq2} f_n$. By the Weierstrass M-test, the series $\sum_{n\geq2} f_n$ is uniformly convergent to a continuous function. Note that $f(X\setminus K_2)=0$. For any function $k$ where $f=k \abs{f}$, we observe that $k$ is not continuous at $x_0$. To see this we note that $f(x_n)=\sum_{m=1}^n a_n$, $n\geq2$. Therefore, $f(x_n)f(x_{n+1})<0$ as $\sum_{m=1}^n a_m\sum_{m=1}^{n+1} a_{m}<0$. So the function $k$ alternates on the convergent sequence $(x_n)$ between $+1$ and $-1$ and it is not continuous at $x_0$.
\end{proof}
So to show that $G^{luc}$ ($G^*$) is not an F-space, it is enough to show that it contains a non-trivial convergent sequence. 

Kuzminov has shown that any compact group is dyadic, i.e., a continuous image of a Cantor cube. This implies that every infinite compact group contains a non-trivial convergent sequence. This result can be found for example in J. Van Mill's article in \cite{MR2367385} (see page 190).

\begin{theorem}\label{not F-space}
If $G$ is a locally compact non-discrete group then $G$ has a non-trivial convergent sequence. In particular, $G$ is not an $F-$space.
\end{theorem}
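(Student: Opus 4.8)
The plan is to produce a non-trivial convergent sequence inside $G$ by reducing, via the structure theory of locally compact groups, to two situations in which such a sequence is visibly present: an infinite compact subgroup (handled by the dyadicity fact quoted just above) and a non-discrete Lie group (handled by elementary manifold topology). The concluding assertion that $G$ is not an F-space will then be immediate from Theorem \ref{F-space}.

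First I would normalize the situation using homogeneity: since left translations are homeomorphisms of $G$, non-discreteness is equivalent to the identity $e$ being a non-isolated point, and it suffices to find a non-trivial sequence converging to $e$. By the Gleason--Yamabe structure theorem, $G$ contains an open subgroup $H$ together with a compact normal subgroup $K\trianglelefteq H$ such that the quotient $H/K$ is a Lie group. Because $H$ is open in the non-discrete group $G$, the subgroup $H$ is itself non-discrete, since otherwise $\{e\}$ would be open in $H$ and hence open in $G$.

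Next I would split according to the size of $K$. If $K$ is infinite, then $K$ is an infinite compact group, so by Kuzminov's theorem $K$ is dyadic and therefore contains a non-trivial convergent sequence (this is precisely the fact recorded before the statement); this sequence lies in $K\subseteq G$ and we are done. If instead $K$ is finite, then $K$ is discrete, so the quotient map $q\colon H\to H/K$ is an open local homeomorphism, its fibres being the finite cosets of $K$, which are separated by the Hausdorff property. Moreover $H/K$ is then a \emph{non-discrete} Lie group: were it discrete, the preimage $K=q^{-1}(\{\bar e\})$ would be a finite open set, forcing $e$ to be isolated in $H$. A non-discrete Lie group is a manifold of dimension at least one and hence contains a non-trivial convergent sequence $\bar x_n\to\bar x_0$; choosing a neighbourhood of a point of $q^{-1}(\bar x_0)$ on which $q$ restricts to a homeomorphism, I would lift the tail of $(\bar x_n)$ to a non-trivial convergent sequence in $H\subseteq G$.

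I expect the only substantive step to be the invocation of the structure theory: the genuinely hard mathematics is outsourced to Kuzminov's dyadicity theorem (the compact case) and to Gleason--Yamabe (the reduction), while the Lie-group case and the lift through the finite kernel are routine point-set topology. The main point requiring care is checking that the two cases are exhaustive and that the finite-kernel quotient really is a local homeomorphism, so that a convergent sequence downstairs genuinely lifts. Finally, since $G$ is Hausdorff, locally compact, non-discrete, and now possesses a non-trivial convergent sequence, Theorem \ref{F-space} applies verbatim and yields that $G$ is not an F-space.
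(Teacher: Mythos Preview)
Your argument is correct and takes a genuinely different route from the paper's. The paper first passes to a $\sigma$-compact clopen non-discrete subgroup $H$ and then splits on whether $H$ is metrizable: if it is, first countability immediately yields a non-trivial convergent sequence; if not, the Kakutani--Kodaira theorem supplies a compact normal subgroup $N\trianglelefteq H$ with $H/N$ metrizable, and a short metrizability argument forces $N$ itself to be non-metrizable, hence infinite, so Kuzminov's dyadicity theorem finishes the job. You instead invoke the Gleason--Yamabe structure theorem to obtain a Lie quotient $H/K$ and split on whether $K$ is infinite (Kuzminov again) or finite (lift a sequence from the non-discrete Lie group through the finite covering). Both proofs funnel the delicate compact case through Kuzminov; the difference lies in the reduction: the paper relies on the comparatively elementary Kakutani--Kodaira theorem and metrizability considerations, whereas you import the full strength of the solution to Hilbert's fifth problem. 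Your approach buys a very clean ``easy'' branch---a positive-dimensional manifold visibly has convergent sequences---at the price of a much heavier structural input.
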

\begin{proof}
It is easy to see that if $G$ is a metrizable non-discrete locally compact group then $G$ has a non-trivial convergent sequence. If $G$ is a locally compact non-discrete group then $G$ has a subgroup $H$ that is sigma-compact, clopen and non-discrete. If $H$ is metrizable then any point in $H$ is a limit point of a non-trivial convergent sequence in $H$. This sequence is also convergent in $G$. So suppose that $H$ is not metrizable. By the Kakutani-Kodaira theorem \cite[Thm. 8.5]{MR551496} there is a compact normal subgroup $N$ of $H$ such that $H/N$ is metrizable, and so by \cite[ 5.38 part (e)]{MR551496}, $N$ cannot be metrizable since otherwise $H$ would be metrizable. Because $N$ is not metrizable it cannot be finite. As noted above, since $N$ is compact and non-discrete we can find a non-trivial convergent sequence in $N$. Any such non-trivial convergent sequence is also convergent in the open subgroup $H$ and therefore in $G$.
\end{proof} 
As stated in the introduction, were $G^*$ an F-space for some non-discrete groups, we would be able to prove stronger versions of our main results in the next section namely, Theorem \ref{nice} and Corollary \ref{abc}. However, as  the next result shows, $G^*$ is an F-space if and only if $G$ is discrete.
\begin{theorem}\label{F}
Suppose that $G$ is a locally compact non-discrete group. Then neither $G^{luc}$ nor $G^*$ is an F-space.
\end{theorem}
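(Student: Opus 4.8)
The plan is to reduce the statement to the two theorems already proved. By Theorem \ref{not F-space}, a locally compact non-discrete group $G$ contains a non-trivial convergent sequence $(x_n)$ converging to some $x_0 \in G$. Since $G$ embeds as a topological subspace of $G^{luc}$, this same sequence is a non-trivial convergent sequence in $G^{luc}$. Moreover $G^{luc}$ is Hausdorff (it is a weak-star compact subset of $LUC(G)^*$) and locally compact (being compact); it is certainly non-discrete since $G$ itself is non-discrete and sits inside it as a subspace. Hence Theorem \ref{F-space} applies directly and shows that $G^{luc}$ is not an F-space.

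For the corona $G^*$ the argument is only slightly more delicate, because the convergent sequence $(x_n)$ above lives in $G$, not in $G^*$, so I cannot simply reuse it. The idea is to move the sequence into the corona. First I would fix a non-trivial convergent sequence $(x_n) \to x_0$ in $G$ as above, with all $x_n \neq x_0$; by passing to a subsequence I may assume the $x_n$ are pairwise distinct. Then I would pick a point $p \in G^*$ — for instance a right cancellable element of $G^*$, which exists by \cite[Thm.~1]{MR1935082}, or simply any cluster point in $G^{luc}$ of an unbounded net in $G$ — and consider the sequence $(x_n \cdot p)_{n\ge 1}$ together with the point $x_0 \cdot p$, all computed using the Arens multiplication in $G^{luc}$. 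Since for a fixed element $p\in G^{luc}$ the right translation map $m \mapsto m\cdot p$ is weak-star continuous on $M(G)$ (as recalled in the preliminaries, left multiplication by any measure, in particular by a point mass $\delta_{x_n}$, is weak-star continuous, and one checks the map $m\mapsto m\cdot p$ is continuous on the $M(G)$ part), continuity of this map on $G$ gives $x_n \cdot p \to x_0 \cdot p$ in $G^{luc}$. Because $C_0(G)^{\perp}$, equivalently $G^*$, is an ideal of $G^{luc}$ and $p \in G^*$, every $x_n \cdot p$ and $x_0 \cdot p$ lie in $G^*$. Finally I must ensure the sequence is non-trivial, i.e. that $x_n\cdot p \ne x_0 \cdot p$ for infinitely many $n$ (and that they are not eventually constant): this is where right cancellability of $p$ is convenient, since $x_n \cdot p = x_0\cdot p$ would force $x_n = x_0$, a contradiction; similarly $x_n\cdot p = x_m \cdot p$ forces $x_n = x_m$. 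Thus $(x_n \cdot p)$ is a non-trivial convergent sequence in the Hausdorff locally compact (compact) non-discrete space $G^*$, and Theorem \ref{F-space} shows $G^*$ is not an F-space.

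The main obstacle I anticipate is the continuity of $m \mapsto m\cdot p$ needed to transport the convergent sequence into the corona: the Arens product is only separately weak-star continuous in its first variable when the second variable is fixed \emph{and} the first variable ranges over $M(G)$ — precisely the content of the remark that $M(G)$ is the largest set of elements for which left multiplication is weak-star continuous. Since the $x_n$ and $x_0$ all lie in $G \subseteq M(G)$, this is exactly the regime where continuity is available, so the obstacle is surmountable, but the argument must be phrased carefully so as to use continuity only on $M(G)$ and not on all of $LUC(G)^*$. A secondary point to handle cleanly is confirming that $G^*$ is non-discrete; this follows since an isolated point of the compact space $G^*$ would be isolated in $G^{luc}$ as well (as $G^*$ is closed), forcing it to be an isolated point of $G^{luc}$, whereas points of $G^*$ are limits of nets from $G$ and hence not isolated — alternatively, the non-trivial convergent sequence just constructed already witnesses non-discreteness of $G^*$.
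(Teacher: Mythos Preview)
Your proposal is correct and follows essentially the same approach as the paper: obtain a non-trivial convergent sequence in $G$ via Theorem~\ref{not F-space}, translate it into $G^*$ by right-multiplying with a right cancellable element of $G^*$, and invoke Theorem~\ref{F-space}. One remark: your worry about the continuity of $m\mapsto m\cdot p$ is misplaced --- as recorded in the preliminaries, this map is weak-star continuous on all of $LUC(G)^*$ for any fixed $p$ (it is the \emph{other} variable, $n\mapsto m\cdot n$, whose continuity requires $m\in M(G)$), so no restriction to $M(G)$ is needed and the argument simplifies accordingly.
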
 

\begin{proof}
Suppose that $G$ is a locally compact non-discrete group. Then $G$ contains a non-trivial sequence, say $(x_n)$, convergent to an element $x$. Let $z$ be a right cancellable element in $G^*$. Then the non-trivial sequence $(x_nz)$ converges to $xz$. So in both cases, neither $G^{luc}$ nor $G^*$ is an F-space, by Theorem \ref{F-space}.
\end{proof}

\begin{definition}
A point in a topological space is called a $P$-point if every $G_{\delta}-$set containing the point is a neighbourhood of the point.
\end{definition}
 Gillman and Henriksen \cite{MR0407579} were the first to study $P$-points. If $G$ is a discrete group then under the continuum hypothesis the set of $P$-points in $G^*$ forms a dense subset in $G^*$. It is a fact that the existence of $P$-points cannot be proved in $Z F C$. We refer the reader to \cite{MR2893605} and the remark in \cite[page 385]{MR1377702} for an explanation of these statements. From Corollary \ref{p} (below) we see that under the continuum hypothesis, if $G$ is a locally compact group, then $G^{luc}\setminus G$ has a $P$-point if and only if $G$ is discrete. $P$-points were used in \cite[Thm. 3]{MR1429649} to show that for a discrete group $G$ there are left cancellable elements. In fact these particular $P$-points in \cite[Thm. 3]{MR1429649} are also right cancellable. It is not known if for the general case of discrete groups there can be left cancellable elements that are not right cancellable (see \cite[Thm. 8.40]{MR2893605}).
\begin{corollary}\label{p}
Suppose that $G$ is a locally compact non-discrete group. Then $G^*$ does not contain any $P$-point.
\end{corollary}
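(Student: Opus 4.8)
The plan is to prove the stronger assertion that \emph{no} point of $G^{*}$ is a $P$-point, by producing for each $p\in G^{*}$ a non-trivial sequence in $G^{*}$ that converges to $p$. I would first record the standard fact that a point $p$ of a compact Hausdorff space $X$ cannot be a $P$-point if it is the limit of a sequence $(y_{n})$ with $y_{n}\neq p$ for all $n$: the set $X\setminus\{y_{n}:n\in\mathbb N\}$ is then a $G_{\delta}$ containing $p$, but it is not a neighbourhood of $p$, since every open set around $p$ must contain some $y_{n}$. Also, if $G$ is compact then $LUC(G)=C(G)$, so $G^{luc}=G$ and $G^{*}=\emptyset$ and there is nothing to prove; hence I assume $G$ is non-compact as well as non-discrete.

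By Theorem \ref{not F-space} the group $G$ has a non-trivial convergent sequence, and after a left translation (and passing to a subsequence) I may fix a sequence $(g_{n})$ in $G$ with $g_{n}\to e$, with $g_{n}\neq e$ for all $n$, and with the $g_{n}$ pairwise distinct. Since $G$ embeds homeomorphically into $G^{luc}$, we have $g_{n}\to e$ in $G^{luc}$, and the weak-$*$-continuity of $m\mapsto m\cdot p$ on $LUC(G)^{*}$ recorded in the preliminaries yields $g_{n}\cdot p\to e\cdot p=p$; since $G^{*}$ is an ideal, each $g_{n}\cdot p$ lies in $G^{*}$. So $(g_{n}p)$ is a sequence in $G^{*}$ converging to $p$, and it only remains to guarantee that it is non-trivial, i.e.\ that $g_{n}p\neq p$ for infinitely many $n$.

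This reduces the whole corollary to the behaviour of the stabiliser $S_{p}:=\{g\in G:gp=p\}$, which by the continuity just used is a closed subgroup of $G$: we need $S_{p}$ to miss a tail of $(g_{n})$. The cleanest sufficient statement is that $S_{p}=\{e\}$ for every $p\in G^{*}$, i.e.\ that the left translation action of $G$ on $G^{*}$ is free; this is immediate when $p$ is right cancellable (as $gp=ep$ then forces $g=e$), and that case is the prototype to imitate. For abelian $G$ the statement follows quickly: given $g\neq e$, a Pontryagin character $\chi$ of $G$ with $\chi(g)\neq1$ belongs to $LUC(G)$ and satisfies $l_{g}\chi=\chi(g)\chi$, so $gp=p$ would give $\chi(g)\langle p,\chi\rangle=\langle p,\chi\rangle$, hence $\langle p,\chi\rangle=0$, contradicting $|\langle p,\chi\rangle|=1$ (which holds since $|\chi|\equiv1$ and $p$ is a multiplicative state). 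For general $G$ I would replace the character by a metric device in the metrizable case --- using a right-invariant metric, which measures every non-trivial left translation by one and the same positive amount at every point of $G$, to separate $p$ from $gp$ by a left uniformly continuous function --- and then reduce the general case to the metrizable one using the structure theory appearing in the proof of Theorem \ref{not F-space} (passage to a metrizable open subgroup, or to an infinite compact normal subgroup of one). Granting $S_{p}=\{e\}$, the inequality $g_{n}\neq e$ forces $g_{n}p\neq p$ for every $n$, so $p$ is not a $P$-point by the first paragraph; as $p$ was arbitrary, $G^{*}$ has no $P$-points.

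The step I expect to be the main obstacle is precisely the claim that $S_{p}=\{e\}$ for every $p\in G^{*}$ --- that no point of $G^{*}$ is fixed by a non-trivial left translation, in particular that $S_{p}$ contains no open subgroup. Everything else is formal once this is in hand; and in fact one could make do with the weaker statement that $S_{p}$ is not open, since then $G\setminus S_{p}$ is dense and one can extract from it a null sequence (immediately if $G$ is metrizable, and by the same structure theory in general). For connected $G$ even the weaker statement reduces to the non-existence of a left zero of $G^{luc}$ lying in $G^{*}$, which is ruled out by non-compactness of $G$; it is securing the full strength, together with the non-metrizable bookkeeping, that will take the real work.
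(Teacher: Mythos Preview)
Your approach is identical to the paper's: produce a null sequence $(g_n)$ in $G$ with $g_n\neq e$, use continuity of $m\mapsto m\cdot p$ to get $g_n p\to p$ in $G^*$, and conclude via the elementary observation you record that a non-trivial sequential limit cannot be a $P$-point.

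The only difference is that the paper passes silently over the step you single out as the main obstacle --- the claim that $g_n p\neq p$ whenever $g_n\neq e$ --- while you propose to prove it from scratch. This claim, that $S_p=\{e\}$ for every $p\in G^{luc}$, is exactly Veech's theorem (freeness of the left action of $G$ on $G^{luc}$), which the paper cites as \cite{MR1847282} and invokes explicitly elsewhere: see the proofs of Proposition~\ref{ext} and Theorem~\ref{nice}, and the remark following Proposition~\ref{ext}. Your abelian-case character argument, the right-invariant-metric idea in the metrizable case, and the Kakutani--Kodaira reduction for the general case amount to an outline of Veech's proof; you can replace all of that by a one-line citation. With Veech's theorem in hand your proof is complete and coincides with the paper's. (Your treatment of the compact case, where $G^{*}=\emptyset$, is a harmless extra remark the paper omits.)
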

\begin{proof}
First observe that for each point $p$ in $G^*$ there is a non-trivial sequence $(x_n)$ converging to $p$. To see this, note that from the proof of Lemma \ref{not F-space} there is a non-trivial sequence $(z_n)$, convergent to a point $y$ in the group. Therefore, the non-trivial sequence $(y_n)$ where $y_n:=y^{-1}z_n$ is convergent to $e$. Now $x_n:=y_np$ is a non-trivial sequence convergent to $p$ in $G^*$. Let $\{U_n\}$ be a family of open neighbourhoods of $p$ such that for each $n$, $x_1,x_2,...,x_n\not\in U_n$. Then $p\in \cap_m U_m$, but for each $n$, $x_n\not\in\cap_m U_m$ so $\cap_m U_m$ cannot be open. Hence $p$ is not a $P$-point.
\end{proof}

\end{section}
\begin{section}{Isomorphism on $C_0(G)^{\perp}$}

In \cite{MR1005079} Ghahramani, Lau and Losert showed that given locally compact groups $G$ and $H$, every isometric isomorphism $\displaystyle{T:LUC(G)^*\to LUC(H)^*}$ maps $M(G)$ onto $M(H)$. It can be shown directly that $G^{luc}$ determines $G$, within the class of locally compact groups. To see this we note that $G^*$ is an ideal in $G^{luc}$ and therefore the only invertible elements of $G^{luc}$ are elements of $G$. An interesting question is wether $C_0(G)^{\perp}$ ($G^*$) also determines $G$. In this section we employ Theorem \ref{F} to show that this is the case when $G$ is discrete (Theorem \ref{nice} and Corollary \ref{abc}).\\

 First we show that if $G$ and $H$ are non-compact locally compact groups and if $T:LUC(G)^*\to LUC(H)^*$ is an isometric isomorphism, then $C_0(G)^{\perp}$ is also mapped onto $C_0(H)^{\perp}$. We recall the following lemma from \cite{MR0177058}.

\begin{lemma}\label{orthogonal}
Let $X$ be a locally compact space, and let $\mu\text{ and }\nu \in M (X)$. Then $\mu$ and $\nu$ are mutually singular if
and only if $\|\mu + \nu\| =\| \mu -\nu\| = \|\mu\| +\|\nu \|$.
\end{lemma}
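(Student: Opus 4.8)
The plan is to reduce both implications to statements about a single dominating measure, so that the variation norms turn into $L^1$-norms of Radon--Nikodym densities. For the easy (necessity) direction I would start from the definition of mutual singularity: choose a Borel set $E\subseteq X$ with $\abs{\mu}(E)=0$ and $\abs{\nu}(X\setminus E)=0$, so that $\mu$ and $\nu$ are carried by the disjoint Borel sets $X\setminus E$ and $E$ respectively. Splitting $\abs{\mu\pm\nu}$ over this partition gives $\abs{\mu\pm\nu}(E)=\abs{\nu}(E)=\|\nu\|$ and $\abs{\mu\pm\nu}(X\setminus E)=\abs{\mu}(X\setminus E)=\|\mu\|$, hence $\|\mu\pm\nu\|=\|\mu\|+\|\nu\|$; this step is pure bookkeeping.

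For the sufficiency direction -- the substantive one -- I would set $\lambda:=\abs{\mu}+\abs{\nu}$, which is a finite positive (regular Borel) measure dominating both $\mu$ and $\nu$, since elements of $M(X)$ have finite total variation. By the Radon--Nikodym theorem write $d\mu=f\,d\lambda$ and $d\nu=g\,d\lambda$ with $f,g\in L^1(\lambda)$, so that $\|\mu\|=\int\abs{f}\,d\lambda$, $\|\nu\|=\int\abs{g}\,d\lambda$ and $\|\mu\pm\nu\|=\int\abs{f\pm g}\,d\lambda$. The hypothesis then reads
\[
\int_X\abs{f+g}\,d\lambda=\int_X\abs{f-g}\,d\lambda=\int_X\bigl(\abs{f}+\abs{g}\bigr)\,d\lambda .
\]
Because $\abs{f\pm g}\le\abs{f}+\abs{g}$ pointwise, each of these integral identities forces the corresponding pointwise identity $\lambda$-almost everywhere. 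Invoking the scalar equality case of the triangle inequality -- $\abs{a+b}=\abs{a}+\abs{b}$ precisely when $a\bar b$ is a non-negative real number -- the first identity gives $f\bar g\ge 0$ a.e.\ and the second gives $-f\bar g\ge 0$ a.e., so $f\bar g=0$, i.e.\ $fg=0$, $\lambda$-a.e. Finally I would put $E:=\{x:f(x)\neq 0\}$: on $E$ one has $g=0$ a.e., so $\abs{\nu}(E)=0$, and on $X\setminus E$ one has $f\equiv 0$, so $\abs{\mu}(X\setminus E)=0$; this is exactly the assertion that $\mu$ and $\nu$ are mutually singular.

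I do not anticipate a genuine obstacle, this being a standard fact (recalled here from \cite{MR0177058}). The only points that deserve care are the passage from the two \emph{integral} equalities to the two \emph{pointwise} almost-everywhere equalities via the triangle inequality, and the combination of those two pointwise equalities through the equality case ``$a\bar b\ge 0$''; the application of Radon--Nikodym is unproblematic precisely because $\lambda$ is finite.
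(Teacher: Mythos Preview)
Your argument is correct in both directions; the reduction to Radon--Nikodym densities against the dominating measure $\lambda=\abs{\mu}+\abs{\nu}$ and the pointwise equality-case analysis of the triangle inequality are exactly the right ingredients, and the passage from integral equality to almost-everywhere equality is justified since the integrands satisfy $\abs{f\pm g}\le\abs{f}+\abs{g}$.

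There is nothing to compare against here: the paper does not prove this lemma at all, but simply recalls it as a known fact from \cite{MR0177058}. You have supplied a complete self-contained proof where the paper is content to cite.
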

Lemma \ref{orthogonal} implies that isometries preserve mutual singularity. \\

Since $LUC(G)=C(G^{luc})$, the Banach space of continuous functions on $G^{luc}$, we have that $LUC(G)^*=M(G^{luc})$, the Banach space of all regular Borel measures on $G^{luc}$. It can be seen that $C_0(G)^{\perp}$ is isometrically isomorphic to $M(G^*)$, the Banach space of regular Borel measures on $G^*$. Therefore, $M(G^{luc})=LUC(G)^*= M(G)\oplus_1 C_0(G)^{\perp}=M(G)\oplus_1 M(G^*)$.
\begin{theorem}
Suppose that $G$ and $H$ are non-compact locally compact groups. If $T: LU C(G)^{*}\to LU C(H)^{*}$ is an isometric isomorphism. Then $T$ maps $C_0 (G)^{\perp}$ onto $C_0 (H)^{\perp}$. 
\end{theorem}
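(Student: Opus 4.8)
The plan is to detect membership in $C_0(G)^{\perp}$ by a purely metric condition, which any isometry must respect, and then to transport that condition from $G$ to $H$ using the Ghahramani--Lau--Losert theorem \cite{MR1005079} that $T$ carries $M(G)$ onto $M(H)$. The first step is to establish the dictionary: for $m\in LUC(G)^{*}=M(G^{luc})$ one has $m\in C_0(G)^{\perp}$ if and only if $m$ is mutually singular to every $\mu\in M(G)$. Under the identifications $LUC(G)^{*}=M(G)\oplus_1 C_0(G)^{\perp}=M(G)\oplus_1 M(G^*)$ recalled above, $M(G)$ is the space of measures concentrated on $G$ and $C_0(G)^{\perp}=M(G^*)$ the space of measures concentrated on the disjoint set $G^*$; since (as noted in the introduction) $G^*$ is closed in $G^{luc}$, the set $G$ is open, hence Borel, and any $m\in M(G^{luc})$ splits as $m=m|_{G}+m|_{G^*}$ with $m|_{G}\in M(G)$ and $m|_{G^*}\in C_0(G)^{\perp}$. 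The ``only if'' direction of the dictionary is then immediate from disjointness of supports, and for ``if'' one takes $\mu=m|_{G}\in M(G)$: if $m$ is singular to $m|_{G}$, a measure cannot be mutually singular to one of its own restrictions unless that restriction vanishes, so $m|_{G}=0$ and $m\in C_0(G)^{\perp}$.

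The second step is to make ``$m$ mutually singular to every $\mu\in M(G)$'' survive $T$. For this I would invoke Lemma \ref{orthogonal}, applied in $M(G^{luc})$: $m$ and $\mu$ are mutually singular exactly when $\|m+\mu\|=\|m-\mu\|=\|m\|+\|\mu\|$. Since $T$ is a linear isometry, these norm identities for the pair $(m,\mu)$ hold if and only if the corresponding identities hold for the pair $(Tm,T\mu)$ in $M(H^{luc})$; by Lemma \ref{orthogonal} again (now in $M(H^{luc})$) this says $Tm$ and $T\mu$ are mutually singular. In other words, $T$ preserves mutual singularity in both directions.

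Combining the two steps finishes the argument. Fix $m\in C_0(G)^{\perp}$. By Step~1, $m$ is singular to every $\mu\in M(G)$; by Step~2, $Tm$ is singular to $T\mu$ for every $\mu\in M(G)$; and since $T(M(G))=M(H)$, the element $Tm$ is singular to every $\nu\in M(H)$; so by Step~1 applied to $H$, $Tm\in C_0(H)^{\perp}$. Thus $T(C_0(G)^{\perp})\subseteq C_0(H)^{\perp}$. Applying the same reasoning to the isometric isomorphism $T^{-1}$, which carries $M(H)$ onto $M(G)$, gives $T^{-1}(C_0(H)^{\perp})\subseteq C_0(G)^{\perp}$, and therefore $T$ maps $C_0(G)^{\perp}$ onto $C_0(H)^{\perp}$.

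The only ingredient here that is not a soft metric or measure-theoretic manipulation is the Ghahramani--Lau--Losert theorem supplying $T(M(G))=M(H)$; granting that, the main point to be careful about is the ``if'' half of the dictionary in Step~1, which genuinely uses that $G$ sits inside $G^{luc}$ as a Borel (indeed open) set so that the restriction $m|_{G}$ is a legitimate element of $M(G)$ --- exactly the content of the decomposition $M(G^{luc})=M(G)\oplus_1 M(G^*)$ recorded before the statement.
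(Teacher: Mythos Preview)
Your proof is correct and follows essentially the same approach as the paper: both arguments rest on the Ghahramani--Lau--Losert result $T(M(G))=M(H)$, the metric characterization of mutual singularity from Lemma~\ref{orthogonal}, and the $\ell^1$-decomposition $M(G^{luc})=M(G)\oplus_1 M(G^*)$. Your presentation is a cleaner repackaging---you isolate the characterization ``$m\in C_0(G)^{\perp}$ iff $m\perp\mu$ for all $\mu\in M(G)$'' up front---whereas the paper reaches the same conclusion by decomposing $T(m)=\nu+r$ and chasing components, but the substance is the same.
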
 
\begin{proof}
Let $m \in C_0 (G)^{\perp}$ , we show that $T(m) \in C_0 (H)^{\perp}$ . Suppose that $T(m) = \nu + r$ , where $\nu \in M (H)$ and $r \in C_0 (H)^{\perp}$, so that $\nu$ and $r$ are mutually singular measures in $M (H^{luc})$. Since an isometry preserves mutual singularity, if we consider $m =T^{-1} (\nu) + T^{-1} (r)$, then we have that $T^{-1} (\nu)$ and $T^{-1} (r)$ are singular and also we have that $T^{-1} (\nu) \in M (G)$, by \cite[Thm. 1.6]{MR1005079}. Suppose that $T^{-1} (r) = \nu^{\prime} + r^{\prime}$ where $\nu^{\prime} \in M (G)$ and $r^{\prime} \in C_0 (G)^{\perp}.$ Therefore $m = T^{-1} (\nu) + \nu^{\prime} + r^{\prime}.$ Now, since $T^{-1} (\nu), \nu^{\prime} \in M (G)$ we must have $T^{-1} (\nu) + \nu^{\prime} = 0$ because $M (G) \cap C_0 (G)^{\perp} = 0.$ Since $\nu^{\prime}$ is absolutely continuous with respect to $T^{-1}(r)$, it is mutually singular with $T^{-1}(\nu)$, and so we have that $T^{-1} (\nu) = 0$. Hence $T(m)=\nu+r=r\in C_0(G)^{\perp}$. 
\end{proof}
\begin{corollary}\label{a}
Suppose that $G$ and $H$ are locally compact groups. Then $T:LUC(G)^*\to LUC(H)^*$ is an isometric isomorphism if and only if there are isometric isomorphisms $T_1:M(G)\to M(H)$ and $T_2:C_0(G)^{\perp}\to C_0(H)^{\perp}$ such that $T=T_1+T_2$, $T_2(\mu m)=T_1(\mu)T_2(m)$ and $T_2(m\mu)=T_2(m)T_1(\mu)$ for all $m\in C_0(G)^{\perp}$ and $\mu\in M(G)$.
\end{corollary}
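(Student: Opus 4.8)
The plan is to obtain this as a direct consequence of the preceding theorem and of the Ghahramani--Lau--Losert result \cite[Thm.\ 1.6]{MR1005079} that any isometric isomorphism $T:LUC(G)^*\to LUC(H)^*$ carries $M(G)$ onto $M(H)$, once the degenerate case of a compact group has been disposed of separately. If $G$ is compact then $LUC(G)=C(G)=C_0(G)$, so $C_0(G)^{\perp}=\{0\}$ and $LUC(G)^*=M(G)$; applying the Ghahramani--Lau--Losert result to an isometric isomorphism $T$ gives $M(H)=T(M(G))=T(LUC(G)^*)=LUC(H)^*$, so $C_0(H)$ is norm dense in the closed subspace $LUC(H)$ and hence equals it, which forces $H$ to be compact (the constant function $1$ lies in $LUC(H)\setminus C_0(H)$ whenever $H$ is non-compact). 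By symmetry, $G$ is compact if and only if $H$ is, and in that case one takes $T_1=T$ and $T_2=0$, so all assertions are trivial. From now on $G$ and $H$ are assumed non-compact.

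For the forward implication, given an isometric isomorphism $T$, the Ghahramani--Lau--Losert result provides $T(M(G))=M(H)$ and the preceding theorem provides $T(C_0(G)^{\perp})=C_0(H)^{\perp}$, so the restrictions $T_1:=T|_{M(G)}$ and $T_2:=T|_{C_0(G)^{\perp}}$ are isometric algebra isomorphisms onto $M(H)$ and $C_0(H)^{\perp}$ respectively. Writing an arbitrary element of $LUC(G)^*$ uniquely as $\mu+m$ with $\mu\in M(G)$ and $m\in C_0(G)^{\perp}$, via $LUC(G)^*=M(G)\oplus_1 C_0(G)^{\perp}$, linearity of $T$ gives $T(\mu+m)=T_1\mu+T_2m$, i.e.\ $T=T_1+T_2$. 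Since $C_0(G)^{\perp}$ is an ideal of $LUC(G)^*$, for $\mu\in M(G)$ and $m\in C_0(G)^{\perp}$ we have $\mu m,\, m\mu\in C_0(G)^{\perp}$, and multiplicativity of $T$ yields $T_2(\mu m)=T(\mu)T(m)=T_1(\mu)T_2(m)$ and likewise $T_2(m\mu)=T_2(m)T_1(\mu)$.

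For the converse, given $T_1$ and $T_2$ with the stated properties, define $T:=T_1+T_2$ on $LUC(G)^*=M(G)\oplus_1 C_0(G)^{\perp}$ by $T(\mu+m)=T_1\mu+T_2m$; it is a linear bijection because $T_1,T_2$ are and both direct sums are honest, and it is isometric because, using $LUC(H)^*=M(H)\oplus_1 C_0(H)^{\perp}$ together with the isometry of $T_1,T_2$, $\|T(\mu+m)\|=\|T_1\mu\|+\|T_2m\|=\|\mu\|+\|m\|=\|\mu+m\|$. For multiplicativity, write $x=\mu+m$ and $y=\nu+n$; then $xy=\mu\nu+(\mu n+m\nu+mn)$ with $\mu\nu\in M(G)$ and the bracketed term in $C_0(G)^{\perp}$ (again because the latter is an ideal), so $T(xy)=T_1(\mu\nu)+T_2(\mu n)+T_2(m\nu)+T_2(mn)$; comparing with the expansion of $(T_1\mu+T_2m)(T_1\nu+T_2n)$, the identity $T(xy)=Tx\,Ty$ is precisely the conjunction of multiplicativity of $T_1$, multiplicativity of $T_2$, and the two intertwining relations.

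I do not expect a genuine obstacle here: the substance is carried entirely by the preceding theorem and the Ghahramani--Lau--Losert result, and everything else is bookkeeping with the two $\ell^1$-direct-sum decompositions. The one point deserving care is the compact case, where $C_0(G)^{\perp}$ collapses to $\{0\}$ and one must separately check that $G$ is compact exactly when $H$ is; a secondary point is to read ``isometric isomorphism'' for $T_1,T_2$ in the algebra sense, since multiplicativity of $T_2$ is needed (and is not automatic) in the converse direction.
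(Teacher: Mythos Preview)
Your proof is correct and is exactly the intended argument: the paper states this corollary without proof, as an immediate consequence of the preceding theorem together with \cite[Thm.~1.6]{MR1005079}, and your write-up simply spells out the bookkeeping with the two $\ell^1$-direct-sum decompositions (including the compact case, which the paper leaves implicit).
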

Note that Corollary \ref{a} shows that the value of $T_1$ is connected to that of $T_2$. It is  not clear if every isometric isomorphism $T:C_0(G)^{\perp}\to C_0(H)^{\perp}$ can be extended to one on $LUC(G)^*$. When either $G$ or $H$ is abelian and discrete and $T$ is weak-star continuous, we will show that such an extension always exists (Theorem \ref{extension}). Proposition \ref{unique} shows that such an extension is always unique. We need the following lemma for the proof of Proposition \ref{unique}.
\begin{lemma}\label{two parts}
Suppose that $G$ is a locally compact group and let $z\in G^*$ be a right cancellable in $G^{luc}$. The following statements hold:\\
$(i)$ $z$ is also right cancellable in $LUC(G)^*$.\\
$(ii)$ If $(n_i)$ is a bounded net in $LUC(G)^*$ and $n_iz \xrightarrow{w^*} nz$ in $LUC(G)^*$, then $n_i\xrightarrow{w^*} n$ in $LUC(G)^*$. 
\end{lemma}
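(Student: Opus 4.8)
The plan is to identify right multiplication by $z$ on $LUC(G)^{*}$ with a pushforward of measures along a continuous self-map of $G^{luc}$, and then read off both assertions from the fact that injective continuous maps of compact Hausdorff spaces induce injective pushforwards.

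First I would set up the identification. Since $LUC(G)=C(G^{luc})$ we have $LUC(G)^{*}=M(G^{luc})$. The semigroup structure gives a map $\rho_{z}\colon G^{luc}\to G^{luc}$, $y\mapsto yz$, which is continuous because $m\mapsto m\cdot z$ is weak-star continuous on $LUC(G)^{*}$ (as recorded in the preliminaries). I claim that the operator $f\mapsto zf$ on $C(G^{luc})$ is exactly the composition operator $C_{\rho_{z}}\colon f\mapsto f\circ\rho_{z}$: both $zf$ and $f\circ\rho_{z}$ are continuous functions on $G^{luc}$, and for $g\in G$ one has $zf(g)=\langle z,l_{g}f\rangle=\langle gz,f\rangle=f(gz)=(f\circ\rho_{z})(g)$, so they agree on the dense set $G$ and hence everywhere. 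Taking adjoints, $m\cdot z=C_{\rho_{z}}^{*}m=(\rho_{z})_{*}m$ is the pushforward of $m$ under $\rho_{z}$, for every $m\in LUC(G)^{*}$.

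For $(i)$: right cancellability of $z$ in $G^{luc}$ says precisely that $\rho_{z}$ is injective on the compact Hausdorff space $G^{luc}$, hence a homeomorphism onto its (necessarily closed) image. Consequently $C_{\rho_{z}}$ is surjective: given $h\in C(G^{luc})$, extend $h\circ\rho_{z}^{-1}$ from the image of $\rho_{z}$ to all of $G^{luc}$ by Tietze's theorem to obtain $f$ with $f\circ\rho_{z}=h$. A surjective operator has injective adjoint, so $m\mapsto m\cdot z=C_{\rho_{z}}^{*}m$ is injective on $LUC(G)^{*}$, which is the statement that $z$ is right cancellable in $LUC(G)^{*}$. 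For $(ii)$: let $(n_{i})$ be a bounded net with $n_{i}z\xrightarrow{w^{*}}nz$. By the Banach--Alaoglu theorem $(n_{i})$ lies in a weak-star compact ball, so it suffices to show that $n$ is its only weak-star cluster point. If a subnet satisfies $n_{i_{j}}\xrightarrow{w^{*}}m$, then weak-star continuity of right multiplication by $z$ gives $n_{i_{j}}z\xrightarrow{w^{*}}mz$; but also $n_{i_{j}}z\xrightarrow{w^{*}}nz$, so $mz=nz$ and therefore $m=n$ by part $(i)$. Hence every weak-star cluster point of $(n_{i})$ equals $n$, and a bounded net in a weak-star compact set with a unique cluster point converges to it, so $n_{i}\xrightarrow{w^{*}}n$.

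The one step that has to be handled with a little care is the identification $m\cdot z=(\rho_{z})_{*}m$, together with the observation that injectivity of $\rho_{z}$ on points upgrades to injectivity of the pushforward, which is where compactness and Tietze's theorem enter; after that the argument is formal. It is worth emphasising that the proof uses in an essential way that $z$ appears on the right: left multiplication by a general element of $LUC(G)^{*}$ is not weak-star continuous, so no analogous statement with $z$ on the left follows by this route.
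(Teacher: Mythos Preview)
Your argument is correct, but it takes a somewhat different route from the paper's. Both proofs rest on the same identification $zf=f\circ\rho_{z}$ (equivalently, $m\cdot z=(\rho_{z})_{*}m$), but the paper exploits it differently: rather than using Tietze to show $f\mapsto zf$ is \emph{surjective}, the paper observes that $\{zf:f\in LUC(G)\}$ is a unital $*$-subalgebra of $C(G^{luc})$ separating points (by right cancellability of $z$), hence \emph{dense} by Stone--Weierstrass. Density already suffices to conclude $(i)$, and for $(ii)$ the paper runs a direct $\varepsilon/3$ estimate: approximate an arbitrary $f$ in norm by some $zg$, then use $\langle n_{i}z-nz,g\rangle\to0$. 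Your compactness/cluster-point argument for $(ii)$ is an equally standard alternative and avoids the explicit approximation, at the cost of invoking Banach--Alaoglu and weak-star continuity of right multiplication. Your Tietze argument actually yields a slightly stronger conclusion (surjectivity rather than dense range of $f\mapsto zf$), though only density is needed here; the paper's Stone--Weierstrass approach makes the algebra structure do the work and avoids appealing to the extension theorem.
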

\begin{proof}
$(i)$ Let $z\in G^*$ be right cancellable in $G^{luc}$. Then we show that $z$ is also right cancellable in $LUC(G)^*$. To see this we note that since $z$ is right cancellable, the unital $*$-algebra $$\{zf,f\in LUC(G)\}\ \ \ \text{where }\ \  zf(x)=\langle z, l_xf\rangle$$ separates the points in $LUC(G)$ and thus, by the Stone-Weierstrass theorem, is dense in $LUC(G)=C(G^{luc})$. So if for some $m,n\in LUC(G)^*$ we have that $mz=nz$, then for all $f\in LUC(G)$ we must have $\langle m,zf\rangle=\langle n,zf\rangle$ and therefore $m=n$.\\
$(ii)$ Suppose that $(n_i)$ is a net in $LUC(G)^*$ such that $(n_i)$ is bounded in norm by $M>0$ and $n_iz \xrightarrow{w^*} nz$ in $LUC(G)^*$. We show that $n_i\xrightarrow{w^*}n$ in $LUC(G)^*$. Suppose that $\varepsilon>0$ and $f\in LUC(G)$ are given. As noted in the proof of part $(i)$, the algebra $\{z g,\ g\in LUC(G)\}$ is norm-dense in $LUC(G)$ so there is $g\in LUC(G)$ such that 
$$\|f-zg\|\leq\frac{\varepsilon}{M}.$$ 
Also $n_iz\xrightarrow{w^*}n z$, so there is $i_0$ such that for all $i\geq i_0$ we have
$$\abs{ n_iz(g)-nz(g)} \leq\varepsilon .$$
Therefore for all $i\geq i_o$,
\begin{eqnarray*}
\abs{  n_i (f)-n(f)} & \leq& \abs{ n_i(f)-n_i(zg)} +\abs{ n_i(zg)-n(zg)} \\
& +& \abs{ n(zg)-n(f)}  \\
& \leq& \varepsilon + \varepsilon+ \varepsilon.
\end{eqnarray*}
\end{proof}
\begin{prop}\label{unique}
Suppose that $G$ and $H$ are locally compact groups and $T:C_0(G)^{\perp}\to C_0(H)^{\perp}$ is an (algebraic) isomorphism. If there is an isomorphism $\tilde{T}:LUC(G)^*\to LUC(H)^*$ such that $\tilde{T}$ is an extension of $T$, then $\tilde{T}$ is unique.
\end{prop}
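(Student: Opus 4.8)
The plan is to reduce uniqueness of the extension $\tilde T$ to the right-cancellability statements of Lemma~\ref{two parts}. The key observation is that $LUC(G)^*$ contains, inside the ideal $C_0(G)^{\perp}$, right cancellable elements $z\in G^*$ (by \cite[Thm.~1]{MR1935082}), and that every element of $LUC(G)^*$ can be ``recovered'' after right multiplication by such a $z$. Since $C_0(G)^{\perp}$ is a (two-sided, weak-star closed) ideal and $z\in G^*\subseteq C_0(G)^{\perp}$, for any $m\in LUC(G)^*$ we have $m\cdot z\in C_0(G)^{\perp}$, and if $\tilde T$ is an algebra homomorphism extending $T$ then
\[
\tilde T(m)\cdot T(z)=\tilde T(m)\cdot\tilde T(z)=\tilde T(m\cdot z)=T(m\cdot z),
\]
because $m\cdot z$ already lies in the domain $C_0(G)^{\perp}$ of $T$. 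Thus $\tilde T(m)\cdot T(z)$ is completely determined by $T$ alone.

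First I would fix a right cancellable element $z\in G^*$. Since $\tilde T$ restricts to an isomorphism of the ideal $C_0(G)^{\perp}$ onto (a subset of, hence by surjectivity of $\tilde T$ combined with the ideal structure, onto) $C_0(H)^{\perp}$, the element $T(z)=\tilde T(z)$ lies in $C_0(H)^{\perp}\subseteq LUC(H)^*$; moreover, because $\tilde T$ is an isomorphism of Banach algebras, $T(z)$ inherits enough cancellation from $z$ that the conclusion of Lemma~\ref{two parts}$(i)$ applies to it — or, more directly, one argues that $T(z)$ is itself right cancellable in $LUC(H)^*$ since $\tilde T$ is bijective and multiplicative. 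Next, given two extensions $\tilde T_1,\tilde T_2$ of $T$, the displayed identity shows $\tilde T_1(m)\cdot T(z)=T(m\cdot z)=\tilde T_2(m)\cdot T(z)$ for every $m\in LUC(G)^*$. By right cancellability of $T(z)$ in $LUC(H)^*$ (Lemma~\ref{two parts}$(i)$ applied in $H$), this forces $\tilde T_1(m)=\tilde T_2(m)$. Since $m$ was arbitrary, $\tilde T_1=\tilde T_2$.

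The one point that needs care — and which I expect to be the main obstacle — is justifying that $T(z)$ is right cancellable in $LUC(H)^*$, since the hypothesis only gives an \emph{algebraic} isomorphism $T$ and an \emph{algebraic} extension $\tilde T$, with no continuity assumed. I would handle this purely algebraically: if $aT(z)=bT(z)$ with $a,b\in LUC(H)^*$, apply $\tilde T^{-1}$ (which is also a multiplicative bijection) to get $\tilde T^{-1}(a)\cdot z=\tilde T^{-1}(b)\cdot z$ in $LUC(G)^*$; then Lemma~\ref{two parts}$(i)$ gives $\tilde T^{-1}(a)=\tilde T^{-1}(b)$, hence $a=b$. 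This sidesteps any need for part $(ii)$ of Lemma~\ref{two parts} or weak-star continuity. The remainder is the bookkeeping already sketched: one only needs that $C_0(G)^{\perp}$ is an ideal (so $m\cdot z$ stays in the domain of $T$) and that $G^*$ contains a right cancellable element, both of which are recorded in the preliminaries.
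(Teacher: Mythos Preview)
Your argument is correct and follows exactly the same route as the paper: pick a right cancellable $z\in G^*$, use Lemma~\ref{two parts}(i) to see that $z$ (hence $T(z)$, via the isomorphism) is right cancellable in $LUC(G)^*$ (resp.\ $LUC(H)^*$), and then cancel $T(z)$ from $\tilde T_1(m)T(z)=T(mz)=\tilde T_2(m)T(z)$. Your explicit algebraic justification that $T(z)$ is right cancellable---pulling back through $\tilde T_1^{-1}$---is precisely the step the paper leaves implicit.
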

\begin{proof}
Suppose that $T_1$ and $T_2$ are two such extensions. Let $z\in G^*$ be right cancellable in $G^{luc}$. Then, by Lemma \ref{two parts}, $z$ is also right cancellable in $LUC(G)^*$ and hence, $T(z)=T_1(z)=T_2(z)$ is also right cancellable in $LUC(H)^*$. For each $m$ in $LUC(G)^*$, then we have that
$$T_1(m)T(z)=T(mz)=T_2(m)T(z).$$ Since $T(z)$ is right cancellable we have that $T_1(m)=T_2(m)$, for all $m$ in $LUC(G)^*$.
\end{proof}

The following proposition shows that under certain conditions the extension of an algebraic homomorphism $\phi:G^*\to H^*$ to an algebraic homomorphism $\varphi:G^{luc}\to H^{luc}$ is also unique.
\begin{prop}\label{ext}
Suppose that $G$ and $H$ are locally compact groups and $\phi:G^*\to H^*$ is an algebraic homomorphism. Suppose that $\varphi_L,\varphi_{L^{\prime}}:G^{luc}\to H^{luc}$ are homomorphic extensions of $\phi$ such that either\\
$(i)\ \varphi_{L}(G),\varphi_{L^{\prime}}(G)\subseteq H,\text{or}$\\ 
$(ii)$ the interior of $\phi(G^*)$ is non-empty.\\
Then, $\varphi_{L}=\varphi_{L^{\prime}}.$
\end{prop}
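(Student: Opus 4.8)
The plan is to notice first that, since $G^{*}$ is an ideal of $G^{luc}$ and both maps restrict to $\phi$ on $G^{*}$, the extensions $\varphi_{L}$ and $\varphi_{L^{\prime}}$ can differ only on $G$; so it is enough to show $\varphi_{L}(g)=\varphi_{L^{\prime}}(g)$ for each $g\in G$. The identity I would build on is that for $g\in G$ and any $z\in G^{*}$ both $gz$ and $zg$ again lie in $G^{*}$, whence
$$\varphi_{L}(g)\,\phi(z)=\varphi_{L}(gz)=\phi(gz)=\varphi_{L^{\prime}}(gz)=\varphi_{L^{\prime}}(g)\,\phi(z),$$
and similarly $\phi(z)\,\varphi_{L}(g)=\phi(z)\,\varphi_{L^{\prime}}(g)$. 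So left translation of $H^{luc}$ by $\varphi_{L}(g)$ and by $\varphi_{L^{\prime}}(g)$ agree on all of $\phi(G^{*})$, and what remains is to upgrade this to $\varphi_{L}(g)=\varphi_{L^{\prime}}(g)$. This is where the two hypotheses and the abundance of right cancellable elements of $H^{luc}$ come in.

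Under $(ii)$ I may assume $H$ non-compact (otherwise $H^{luc}=H$, $H^{*}=\emptyset$ and $\phi$ has empty domain). Using \cite[Thm.~1]{MR1935082} --- in the form, obtainable from its proof, that the right cancellable elements of $H^{luc}$ are dense in $H^{*}$ --- together with the hypothesis that $\phi(G^{*})\subseteq H^{*}$ has non-empty interior, I would choose $z_{0}\in G^{*}$ with $w_{0}:=\phi(z_{0})$ right cancellable in $H^{luc}$. Then from $\varphi_{L}(g)\,w_{0}=\varphi_{L^{\prime}}(g)\,w_{0}$ right cancellation of $w_{0}$ yields $\varphi_{L}(g)=\varphi_{L^{\prime}}(g)$; equivalently, by the Stone--Weierstrass argument of Lemma~\ref{two parts} the set $\{w_{0}f:f\in LUC(H)\}$ is dense in $LUC(H)$, and the characters $\varphi_{L}(g)$ and $\varphi_{L^{\prime}}(g)$ agree on it.

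Under $(i)$ the elements $\varphi_{L}(g),\varphi_{L^{\prime}}(g)$ lie in $H$ and hence are invertible in $H^{luc}$, so from $\varphi_{L}(g)\,\phi(z)=\varphi_{L^{\prime}}(g)\,\phi(z)$ one gets $k\,\phi(z)=\phi(z)$ for every $z\in G^{*}$, where $k:=\varphi_{L}(g)^{-1}\varphi_{L^{\prime}}(g)\in H$. Here I would invoke that $H$ acts freely on $H^{luc}$ by left translations --- i.e.\ $k\,y=y$ for some $y\in H^{luc}$ forces $k=e$ --- to conclude $k=e$, that is $\varphi_{L}(g)=\varphi_{L^{\prime}}(g)$. (When $\phi$, and hence $\varphi_{L}$, is in addition injective, as it is in the intended applications, one can avoid the freeness of the action: then $\varphi_{L}$ carries a right cancellable element of $G^{luc}$ to a right cancellable element of $H^{luc}$, and one simply cancels $\phi(z)$ in $k\,\phi(z)=e\,\phi(z)$.)

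The hard part will be exactly this last step, and for the same underlying reason in both cases: the extensions are assumed to be homomorphisms only algebraically, so the density of $G$ in $G^{luc}$ cannot be used to pin them down, and one must instead produce a genuine cancellation inside $H^{luc}$. In $(ii)$ the open-interior hypothesis is precisely what forces a right cancellable point of $H^{luc}$ to sit inside $\phi(G^{*})$; in $(i)$ the invertibility of $\varphi_{L}(g)$ collapses everything to the single stabiliser identity $k\,\phi(z)=\phi(z)$, and it is the freeness of the left translation action of $H$ on $H^{luc}$ (equivalently, that $\phi$ preserves right cancellability in this situation) that I expect to be the real content.
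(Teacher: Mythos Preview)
Your proposal is correct and follows essentially the same route as the paper: both derive the identity $\varphi_L(g)\,\phi(z)=\varphi_{L'}(g)\,\phi(z)$ for $g\in G$, $z\in G^*$, and then in case~(i) invoke Veech's theorem (your ``freeness of the left translation action of $H$ on $H^{luc}$'' is exactly what the paper cites as Veech's theorem), while in case~(ii) use density of right cancellable elements in $H^*$ together with the non-empty-interior hypothesis to pick $z_0$ with $\phi(z_0)$ right cancellable. (Your parenthetical alternative under injectivity is not needed and not quite justified as stated---injectivity alone does not force preservation of right cancellability---but this does not affect the main argument.)
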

\begin{proof}
Let $x\in G$. Then, for all $p\in G^*$,
 $$\varphi_{L}(x)\phi(p)=\varphi_{L}(xp)=\phi(xp)=\varphi_{L^{\prime}}(xp)=\varphi_{L^{\prime}}(x)\phi(p).$$ In the case of $(i)$, $\varphi_{L}(x)=\varphi_{L^{\prime}}(x)$, by Veech's theorem \cite{MR1847282}. By \cite[Thm. 1]{MR1935082}, the set of right cancellable elements is dense in $G^*$, so in the second case we can choose $p\in G^*$ such that $\phi(p)$ is right cancellable.
\end{proof} 
In fact, if $\varphi:G^{luc}\to H^{luc}$ is a surjective homomorphism, then $\varphi$ satisfies condition $(i)$ in Proposition \ref{ext}. To see this, we observe that since $\varphi$ is onto, there is a $q\in G^{luc}$, such that $e_H=\varphi(q)$. We have that
$$e_H=\varphi(e_Gq)=\varphi(e_G)\varphi(q)=\varphi(e_G)e_H=\varphi(e_G).$$
\noindent Thus, for each $x$ in $G$, we have that $e_H=\varphi(x)\varphi({x^{-1}})$ and thus $\varphi(x)$ must belong to $H$.\\
\indent As noted in the introduction, when $\psi:G\to H$ is a continuous homomorphism and $\alpha:G\to\mathbb{T}$ is the constant character $1$, $j_{1,\psi}^*:LUC(G)^*\to LUC(H)^*$ is a weak-star continuous homomorphism and $\tilde{\psi}:=j_{1,\psi}^*|_{G^{luc}}$ is a continuous homomorphism of $G^{luc}$ into $H^{luc}$. We say that the continuous homomorphism $\phi:G^*\to H^*$ is induced by a continuous homomorphism $\psi:G\to H$, if $\tilde{\psi}|_{G^*}=\phi$.\\
\indent Note that if $\phi:G^*\to H^*$ is a continuous homomorphism, it is not necessarily induced by a homomorphism $\psi:G\to H$. A simple example is $\phi :G^*\to H^*$ where $\phi (p)=\iota$ and $\iota$ is an idempotent  in $H^*$ (see \cite[Thm. 2.5]{MR2893605} for a proof of the existence of idempotents in $G^*$). To see this suppose that there is a homomorphism $\psi:G\to H$ such that $\tilde{\psi}|_{G^*}=\phi$. Then since
 $$\iota=\tilde{\psi}(xp)=\psi(x)\phi(p)=\psi(x)\iota$$ for (any) $p\in G^*$ and $x\in G$, we have that $\psi$ is the trivial homomorphism $x\mapsto e_G$, by Veech's theorem \cite{MR1847282}. By uniqueness, $\tilde{\psi}(p)=e_G$, for all $p\in G^{luc}$, which is not possible as $\tilde{\psi}|_{G^*}=\phi$.\\
\indent We apply Theorem \ref{F} and  \cite[Thm. 6.2]{MR1761431} to prove the next result.
\begin{theorem}\label{nice}
Let $G$ and $H$ be locally compact groups. Suppose that $\phi: G^*\to H^*$ is a continuous isomorphism and that either $G$ or $H$ is discrete. Then there is a unique topological isomorphism $\psi:G\to H$ such that $\phi=\tilde{\psi}|_{G^*}$. In particular, $G$ and $H$ are isomorphic, as topological groups.
\end{theorem}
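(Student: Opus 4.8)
The plan is to reduce to the case in which both $G$ and $H$ are discrete, and then to invoke the corresponding statement for discrete groups, \cite[Thm.~6.2]{MR1761431}. We may assume throughout that $G$ and $H$ are non-compact, since otherwise $G^{*}$ and $H^{*}$ are empty and there is nothing to prove. Assume, without loss of generality, that $H$ is discrete. Then $H^{luc}=\beta H$, so $H^{*}=\beta H\setminus H$ is the corona of the Stone--\v{C}ech compactification of a discrete space and is therefore an F-space. Now $\phi\colon G^{*}\to H^{*}$ is a continuous bijection between the compact Hausdorff spaces $G^{*}$ and $H^{*}$ (recall $G$ is open in $G^{luc}$, so $G^{*}$ is closed in the compact space $G^{luc}$), hence a homeomorphism; since being an F-space is a homeomorphism invariant, $G^{*}$ is an F-space as well. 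By Theorem~\ref{F} this forces $G$ to be discrete, and in particular $G^{luc}=\beta G$.

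With $G$ and $H$ both discrete, $\phi$ is a topological isomorphism between the coronas $\beta G\setminus G$ and $\beta H\setminus H$, so \cite[Thm.~6.2]{MR1761431} supplies a group isomorphism $\psi\colon G\to H$ with $\tilde{\psi}|_{G^{*}}=\phi$. Since $G$ and $H$ carry the discrete topology, $\psi$ is automatically a topological isomorphism; this proves the existence assertion, and with it the conclusion that $G$ and $H$ are topologically isomorphic.

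For uniqueness, suppose $\psi_{1},\psi_{2}\colon G\to H$ are topological isomorphisms with $\tilde{\psi}_{1}|_{G^{*}}=\phi=\tilde{\psi}_{2}|_{G^{*}}$. Then $\tilde{\psi}_{1},\tilde{\psi}_{2}\colon G^{luc}\to H^{luc}$ are homomorphic extensions of the homomorphism $\phi$, and each maps $G$ into $H$ (its restriction to $G$ being $\psi_{1}$, respectively $\psi_{2}$). Hence condition $(i)$ of Proposition~\ref{ext} is satisfied, so $\tilde{\psi}_{1}=\tilde{\psi}_{2}$ and, restricting to $G$, $\psi_{1}=\psi_{2}$.

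The genuinely new ingredient compared with the discrete-group literature is the reduction carried out in the first paragraph, where Theorem~\ref{F} does all the work: it is precisely the failure of the F-space property for locally compact non-discrete groups that allows the discreteness of one group to be transported along the homeomorphism $\phi$ to the other. The only delicate point I anticipate is confirming that the hypotheses of \cite[Thm.~6.2]{MR1761431} are met verbatim — that it is indeed stated for topological isomorphisms of the coronas themselves and carries no extra cardinality restriction — which I would settle by checking that reference directly.
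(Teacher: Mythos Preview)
Your argument follows the paper's exactly: reduce to both groups discrete via Theorem~\ref{F} (using that the F-space property transfers along the homeomorphism $\phi$), then invoke \cite[Thm.~6.2]{MR1761431}. You are more explicit about the reduction step than the paper, which simply asserts that ``by Theorem~\ref{F} we have that both $G$ and $H$ are discrete''.

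There is one small gap. You assume \cite[Thm.~6.2]{MR1761431} delivers a group \emph{isomorphism} $\psi$, but as the paper's wording makes clear, that result only furnishes a continuous \emph{surjection} $\psi:G\to H$ with $\tilde{\psi}|_{G^{*}}=\phi$. The paper then supplies a short injectivity argument: if $\psi(x)=\psi(y)$ for some $x,y\in G$, then for any $q\in G^{*}$ one has
\[
\phi(xq)=\tilde{\psi}(xq)=\psi(x)\phi(q)=\psi(y)\phi(q)=\tilde{\psi}(yq)=\phi(yq),
\]
so $xq=yq$ by injectivity of $\phi$, and $x=y$ by Veech's theorem. You should insert this step; your caveat about checking the reference was aimed at the hypotheses rather than the conclusion, so this is a genuine omission. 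On the other hand, your explicit uniqueness argument via Proposition~\ref{ext} is correct and in fact more thorough than the paper, which does not address uniqueness in the proof at all.
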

\begin{proof}
Since $\phi: G^*\to H^*$ is a continuous isomorphism, by Theorem \ref{F} we have that both $G$ and $H$ are discrete groups. The existence of such a continuous surjection $\psi:G\to H$ now follows from \cite[Thm. 6.2]{MR1761431}. We shall also show that $\psi$ is injective. Suppose that for some $x,y\in G$, we have that $\psi(x)=\psi(y)$. Then for each $q\in G^*$, since $H^*$ is an ideal in $H^{luc}$, we have that $$\phi(xq)=\tilde{\psi}(xq)=\psi(x)\phi(q)=\psi(y)\phi(q)=\tilde{\psi}(yq)=\phi(yq).$$ Since $\phi$ is injective, $xq=yq$ and so, by Veech's theorem \cite{MR1847282}, $x=y$. Thus $\psi$ is an isomorphism of topological groups and therefore $G$ and $H$ must be isomorphic.
\end{proof}

Is \cite[Thm. 6.2]{MR1761431} true when $G$ and $H$ are not assumed to be discrete? Although some details such as the existence of prime elements in the proof of \cite[Thm. 6.2]{MR1761431} remain valid for non-discrete locally compact groups, the proof heavily depends on the Lemma \ref{FS} above. By Theorem \ref{F}, Lemma \ref{FS} cannot be employed for non-discrete locally compact groups, so the same proof will not work.

 \begin{corollary}\label{abc}
Let $G$ and $H$ be locally compact groups. Suppose that $T:C_0(G)^{\perp}\to C_0(H)^{\perp}$ is a weak-star continuous isometric isomorphism and either $G$ or $H$ is discrete. Then the topological groups $G$ and $H$ are isomorphic.
 \end{corollary}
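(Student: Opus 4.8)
The plan is to distill from $T$ a continuous isomorphism $\phi\colon G^*\to H^*$ of the corona semigroups and then invoke Theorem~\ref{nice}. Recall the identifications $C_0(G)^{\perp}\cong M(G^*)$ and $C_0(H)^{\perp}\cong M(H^*)$, under which a point $z\in G^*\subseteq G^{luc}$, viewed as an element of $LUC(G)^*$, corresponds to the Dirac measure $\delta_z\in M(G^*)$. Being a surjective linear isometry, $T$ carries the extreme points of the closed unit ball of $M(G^*)$ onto those of $M(H^*)$, and for a compact Hausdorff space $X$ these are precisely the measures $\lambda\,\delta_x$ with $x\in X$ and $\lambda\in\mathbb{T}$. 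Hence there are a bijection $\phi\colon G^*\to H^*$ and a map $\alpha\colon G^*\to\mathbb{T}$ with
\[
T(\delta_z)=\alpha(z)\,\delta_{\phi(z)}\qquad(z\in G^*).
\]

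Next I would show $\phi$ is a homeomorphism. Since $C(G^*)$ separates the points of the compact space $G^*$, the map $z\mapsto\delta_z$ is a homeomorphism of $G^*$ onto a weak-star compact subset of $C_0(G)^{\perp}$, and likewise for $H^*$; moreover the base-point map $\lambda\,\delta_w\mapsto w$ is continuous on $\{\lambda\,\delta_w:\lambda\in\mathbb{T},\ w\in H^*\}$. As $T$ is weak-star continuous, composing these three maps exhibits $\phi$ as continuous, and a continuous bijection from a compact space onto a Hausdorff space is a homeomorphism. (In passing, $\alpha(z)=\langle T\delta_z,1\rangle$, with $1$ the constant function on $H^*$, is continuous as well, though that is not needed here.)

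It remains to see $\phi$ is multiplicative. For $z,z'\in G^*$ one has $\delta_z\cdot\delta_{z'}=\delta_{zz'}$, because the Arens product on $LUC(G)^*$ restricts to the semigroup multiplication on $G^{luc}$ and $C_0(G)^{\perp}$ is an ideal containing all these Diracs. Applying the multiplicative isometry $T$,
\[
\alpha(zz')\,\delta_{\phi(zz')}=T(\delta_{zz'})=T(\delta_z)\cdot T(\delta_{z'})=\alpha(z)\alpha(z')\,\delta_{\phi(z)\phi(z')},
\]
and comparing the (singleton) supports forces $\phi(zz')=\phi(z)\phi(z')$. Thus $\phi\colon G^*\to H^*$ is a continuous isomorphism, so, one of $G,H$ being discrete, Theorem~\ref{nice} yields a topological isomorphism $\psi\colon G\to H$; in particular $G$ and $H$ are isomorphic as topological groups.

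The genuine content has already been absorbed into Theorems~\ref{F} and \ref{nice} (and, through the latter, into \cite[Thm.~6.2]{MR1761431}); here the only delicate points are bookkeeping ones — that under $C_0(G)^{\perp}\cong M(G^*)$ the Arens product restricted to $\{\delta_z:z\in G^*\}$ is the multiplication coming from the semigroup $G^{luc}$, and that the extreme-point description of the isometry dovetails with the weak-star continuity of $T$ as above — and I expect no essential obstacle beyond these.
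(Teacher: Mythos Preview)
Your proposal is correct and follows essentially the same line as the paper's own proof: identify the extreme points of the unit ball of $C_0(G)^{\perp}\cong M(G^*)$, use the isometry to obtain $T(\delta_z)=\alpha(z)\delta_{\phi(z)}$, verify that $\phi$ is a continuous (hence homeomorphic) isomorphism of $G^*$ onto $H^*$, and then invoke Theorem~\ref{nice}. The only cosmetic difference is that the paper first establishes that $\alpha$ is a continuous character and then writes $\phi=\bar{\alpha}\,T|_{G^*}$, whereas you obtain the continuity and multiplicativity of $\phi$ directly via the base-point map and support comparison; both routes amount to the same computation.
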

 \begin{proof}
 First we note that since $C_0(G)^{\perp}=M(G^*)$, by \cite[Thm. V.8.4]{MR1070713} the set of extreme points of the unit ball of $C_0(G)^{\perp}$ is 
 $$\{\alpha \delta_p;\ \alpha\in\mathbb{T},\ p\in G^*\}.$$
 In particular the points in $G^*$ are among the extreme points of the unit ball of $C_0(G)^{\perp}$. Because $T$ is an isometry, it maps the extreme points of the unit ball of $C_0(G)^{\perp}$ onto the extreme points of the unit ball of $C_0(H)^{\perp}$. Therefore there exist maps $\phi:G^*\to H^*$ and $\alpha:G^*\to\mathbb{T}$ such that $T(\delta_p)=\alpha(p)\delta_{\phi(p)}$, for all $p\in G^*$. We first show that $\alpha$ is continuous. Suppose that $(p_{\gamma})$ is a net in $G^*$ that is convergent to $p\in G^*$. Since $T$ is weak-star continuous we have that 
 $$\alpha(p_{\gamma})\delta_{\phi(p_{\gamma})}=T(\delta_{p_{\gamma}})\xrightarrow{w^*} T(\delta_p)=\alpha(p)\delta_{\phi(p)}.$$ 
Evaluating this equation at $1_{G^*}$ implies that $\alpha(p_{\gamma})\to\alpha(p)$. Hence $\alpha$ is continuous. Similarly, since $T$ is an isomorphism we can show that $\alpha$ is also multiplicative and thus $\alpha$ is in fact a continuous character. Therefore $\phi=\bar{\alpha}T|_{G^*}:G^*\to H^*$ is a continuous isomorphism. Since $G^*$ is compact and $H^*$ is Hausdorff, the continuous isomorphism $\phi:G^*\to H^*$ is also a homeomorphism. Now the result follows from Theorem \ref{nice} .
  \end{proof}
When $G$ and $H$ are both non-discrete locally compact groups, it remains open wether $C_0(G)^{\perp}$ determines $G$. \\

\begin{prop}\label{character0}
Let $G$ be a locally compact group, $K$ a compact group, and suppose that $\alpha:G^*\to K$ is a continuous homomorphism. Let $\iota$ be any idempotent in $G^*$ and define $\alpha_l:G^{luc}\to K$ by $\alpha_l(x)=\alpha(x\iota)$. Then the following statements hold:\\
$(i)$ $\alpha_l$ is a continuous extension of $\alpha$ to $G^{luc}$.\\
$(ii)$ If $\beta$ is any homomorphic extension of $\alpha$ to $G^{luc}$, then $\beta=\alpha_l$. (Thus any such homomorphic extension -if it exists- is unique and automatically continuous.)\\
$(iii)$ If $\iota$ commutes with elements of $G$, then $\alpha_l$ is a homomorphism on $G^{luc}$. 
\end{prop}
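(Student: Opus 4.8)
The plan is to prove the three parts in the order stated, since (ii) rests on (i) and (iii) is the only one requiring real work.

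For (i), observe that $\alpha_l$ is a composition of two continuous maps. The right translation $R_\iota\colon x\mapsto x\iota$ is weak-star continuous on $G^{luc}$ (recall from the preliminaries that $m\mapsto m\cdot n$ is weak-star continuous for each fixed $n\in LUC(G)^*$), and since $G^*$ is an ideal of $G^{luc}$ its range lies in $G^*$, where $\alpha$ is continuous by hypothesis; hence $\alpha_l=\alpha\circ R_\iota$ is continuous. To see that $\alpha_l$ restricts to $\alpha$ on $G^*$, take $p\in G^*$: then $p\iota\in G^*$ and, by multiplicativity of $\alpha$, $\alpha_l(p)=\alpha(p\iota)=\alpha(p)\alpha(\iota)$. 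Since $\iota$ is idempotent, $\alpha(\iota)$ is an idempotent of the group $K$, hence $\alpha(\iota)=e_K$, and $\alpha_l(p)=\alpha(p)$.

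For (ii), let $\beta\colon G^{luc}\to K$ be any homomorphic extension of $\alpha$. For $x\in G^{luc}$ the element $x\iota$ lies in $G^*$, so $\beta(x\iota)=\alpha(x\iota)=\alpha_l(x)$; on the other hand $\beta(x\iota)=\beta(x)\beta(\iota)=\beta(x)\alpha(\iota)=\beta(x)e_K=\beta(x)$. Therefore $\beta=\alpha_l$, and continuity of $\beta$ is then automatic by (i).

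The substance is (iii). The goal $\alpha_l(xy)=\alpha_l(x)\alpha_l(y)$ for all $x,y\in G^{luc}$ rewrites, using that $\alpha$ is multiplicative on $G^*$ and that $G^*$ is an ideal, as the identity $\alpha(xy\iota)=\alpha(x\iota y\iota)$ (indeed $\alpha(xy\iota)=\alpha_l(xy)$ and $\alpha(x\iota y\iota)=\alpha((x\iota)(y\iota))=\alpha(x\iota)\alpha(y\iota)=\alpha_l(x)\alpha_l(y)$). I would establish this by a two-step density argument anchored at the case $x,y\in G$. On $G\times G$ it is immediate: since $\iota$ commutes with elements of $G$ and $\iota^2=\iota$, one has $x\iota y\iota=x(\iota y)\iota=x(y\iota)\iota=xy\iota$. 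To pass to $G^{luc}$ in the second variable, fix $x\in G$ and note that $y\mapsto\alpha(xy\iota)$ is weak-star continuous (left translation by the fixed $x\in G\subseteq M(G)$, then $R_\iota$, then $\alpha$), while $y\mapsto\alpha(x\iota y\iota)$ is also weak-star continuous once rewritten as $\alpha(x\iota)\cdot\alpha(y\iota)$ (now only $R_\iota$, $\alpha$, and left multiplication in $K$ by the fixed element $\alpha(x\iota)$ are involved); density of $G$ in $G^{luc}$ then gives the identity for all $x\in G$ and all $y\in G^{luc}$. Finally, fix $y\in G^{luc}$: both $x\mapsto\alpha(xy\iota)$ and $x\mapsto\alpha(x\iota y\iota)$ are weak-star continuous in $x$, being right translations by the fixed elements $y\iota$ and $\iota y\iota$ followed by $\alpha$, so density of $G$ again promotes the identity to all $x,y\in G^{luc}$, which is (iii).

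The main obstacle to watch for is that left multiplication in $G^{luc}$ by an element of $G^*$ (such as $x\iota$) is \emph{not} in general weak-star continuous, so one cannot directly deform $\alpha(x\iota y\iota)$ in $y$ through the semigroup; the fix is precisely the rewriting $\alpha(x\iota y\iota)=\alpha(x\iota)\alpha(y\iota)$, which moves the only delicate multiplication into $K$, where multiplication is jointly continuous, before invoking density.
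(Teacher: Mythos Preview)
Your proofs of (i) and (ii) coincide with the paper's. For (iii) your argument is correct but follows a genuinely different route. The paper first shows, exactly as you do, that $\alpha_l$ is multiplicative on $G$; but instead of a two-step density argument it then invokes the universal property of the $LUC$-compactification: the continuous homomorphism $\alpha_G:=\alpha_l|_G:G\to K$ extends (uniquely) to a continuous homomorphism $\widetilde{\alpha_G}:G^{luc}\to K^{luc}=K$, and since $\widetilde{\alpha_G}$ and $\alpha_l$ are both continuous and agree on the dense subset $G$, they coincide, so $\alpha_l$ is a homomorphism. This is shorter and more conceptual, but it imports the extension machinery $j_{1,\psi}^*$ from the introduction. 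Your direct approach is self-contained and, notably, isolates and circumvents the genuine obstruction---the failure of weak-star continuity of left multiplication by $x\iota\in G^*$---by rewriting $\alpha(x\iota y\iota)=\alpha(x\iota)\alpha(y\iota)$ before passing to limits in $y$; this makes explicit exactly where the homomorphism property of $\alpha$ on $G^*$ is doing the work.
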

\begin{proof}
$(i)$ We note that 
$$\alpha(\iota)=\alpha(\iota^2)=\alpha(\iota)^2,$$
so $\alpha(\iota)=e_K$, the identity in $K$. As $x\mapsto x\iota$ is a continuous mapping of $G^{luc}$ into $G^*$, and $\alpha$ is continuous on $G^*$, $\alpha_l$ is continuous on $G^{luc}$.\\
$(ii)$ If $\beta$ is any such homomorphic extension of $\alpha$, then for any $x\in G^{luc}$, 
$$\beta(x)=\beta(x)e_K=\beta(x)\beta(\iota)=\beta(x\iota)=\alpha(x\iota)=\alpha_l(x).$$
$(iii)$ For any $x,y\in G$,
$$\alpha_l(xy)=\alpha(xy\iota)=\alpha(xy\iota^2)=\alpha(x\iota y\iota)=\alpha(x\iota)\alpha(y\iota)=\alpha_l(x)\alpha_l(y).$$
Thus, $\alpha_G:=\alpha_l|_G:G\to K$ is a continuous homomorphism. (Note that it is not yet clear that $\alpha_l$ is a homomorphism on $G^{luc}$, even though its restrictions to $G$ and $G^*$ are homomorphisms.) As observed in the introduction, $\alpha_G$ extends to a continuous homomorphism $\widetilde{\alpha_G}:G^{luc}\to K$ (note that since $K$ is compact, $K^{luc}=K$). As both $\widetilde{\alpha_G}$ and $\alpha_l$ are continuous extensions of $\alpha_G=\alpha_l|_G$ to $G^{luc}$, we must have $\alpha_l=\widetilde{\alpha_G}$. Hence, $\alpha_l$ is a homomorphism.
\end{proof}
\begin{corollary}\label{character}
If $G$ is an abelian locally compact group and $K$ is a compact group, then every continuous homomorphism $\alpha:G^*\to K$ has a unique continuous extension to $G^{luc}$.
\end{corollary}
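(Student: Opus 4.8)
The plan is to read this off from Proposition \ref{character0}: the only extra ingredient needed is that, when $G$ is abelian, $G^*$ contains an idempotent that commutes with every element of $G$; parts $(i)$, $(ii)$ and $(iii)$ of that proposition then produce a continuous homomorphic extension of $\alpha$ and establish its uniqueness.

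First I would record the elementary fact that every element of an abelian locally compact group $G$ is central in $G^{luc}$. For $x\in G$ and $f\in LUC(G)$ we have $(xf)(g)=\langle x,l_g f\rangle=f(gx)=f(xg)=(l_x f)(g)$, so $xf=l_x f$; hence, for any $p\in G^{luc}$, $\langle px,f\rangle=\langle p,xf\rangle=\langle p,l_x f\rangle=\langle xp,f\rangle$, that is, $px=xp$. If $G$ is compact then $G^{luc}=G$ and $G^*=\emptyset$, so there is nothing to prove; hence assume $G$ is non-compact. Then $G$ is a proper subset of the compact space $G^{luc}$, so $G^*\neq\emptyset$, and $G^*$, being a weak-star compact subsemigroup of $G^{luc}$, contains an idempotent $\iota$ (see \cite[Thm. 2.5]{MR2893605}). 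By the centrality just observed, $\iota$ commutes with every element of $G$.

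Now I would apply Proposition \ref{character0} to the continuous homomorphism $\alpha:G^*\to K$ with this choice of $\iota$. Part $(i)$ yields the continuous extension $\alpha_l:G^{luc}\to K$ given by $\alpha_l(x)=\alpha(x\iota)$; part $(iii)$, whose hypothesis is met precisely because $G$ is abelian, shows that $\alpha_l$ is a homomorphism on all of $G^{luc}$; and part $(ii)$ shows that any homomorphic extension of $\alpha$ to $G^{luc}$ must equal $\alpha_l$, giving uniqueness. I do not anticipate a genuine obstacle here: once Proposition \ref{character0} is available the corollary is essentially bookkeeping, the one substantive point being the centrality computation above, which is exactly what makes hypothesis $(iii)$ of Proposition \ref{character0} applicable in the abelian case.
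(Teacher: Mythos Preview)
Your proposal is correct and follows essentially the same route as the paper: verify that an idempotent $\iota\in G^*$ commutes with every $x\in G$ when $G$ is abelian, and then invoke Proposition~\ref{character0}. The only cosmetic difference is that the paper establishes $x\iota=\iota x$ by approximating $\iota$ with a net $(x_\gamma)\subset G$ and using one-sided weak-star continuity of multiplication, whereas you compute $xf=l_xf$ directly from the Arens-product definition; both arguments are standard and equivalent.
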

\begin{proof}
 Let $\iota$ be any idempotent in $G^*$ and $(x_{\gamma})$ be a net in $G$ convergent to $\iota$. We have that for each $x\in G$
 $$x\iota=x(\lim_{\gamma}x_{\gamma})=\lim_{\gamma}(xx_{\gamma})=\lim_{\gamma}(x_{\gamma}x)=(\lim_{\gamma}x_{\gamma})x=\iota x.$$
 The result now follows from Proposition \ref{character0}.
\end{proof}
 We say a linear operator $T:C_0(G)^{\perp}\to C_0(H)^{\perp}$ is positive if for each positive linear functional $m\in M(G^*)=C(G^*)^*$ we have that $T(m)$ is a positive linear functional, here as usual $m$ positive means $m(f)\geq 0$, whenever $f\geq0$.\\
\begin{theorem}\label{extension}
Let $G$ and $H$ be locally compact groups with either $G$ or $H$ discrete and suppose that $T:C_0(G)^{\perp}\to C_0(H)^{\perp}$ is a weak-star continuous isometric isomorphism. If either $G$ is abelian, or $T$ is a positive operator,  then there exists a unique weak-star continuous isometric isomorphism $\tilde{T}:LUC(G)^*\to LUC(H)^*$ such that $\tilde{T}|_{C_0(G)^{\perp}}=T$.
\end{theorem}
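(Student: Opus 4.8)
The strategy is to exhibit the extension concretely in the canonical form $\tilde{T}=j_{\beta,\psi}^{*}$, reading off $\psi$ and (most of) $\beta$ from the proof of Corollary~\ref{abc}. First I would recall what that proof supplies under the present hypotheses: by Theorem~\ref{F} both $G$ and $H$ are discrete; there are a continuous character $\alpha\colon G^{*}\to\mathbb{T}$ and a continuous isomorphism $\phi\colon G^{*}\to H^{*}$ (which, $G^{*}$ being compact and $H^{*}$ Hausdorff, is a homeomorphism) with $T(\delta_{p})=\alpha(p)\,\delta_{\phi(p)}$ for every $p\in G^{*}$; and, by Theorem~\ref{nice}, there is a unique topological isomorphism $\psi\colon G\to H$ with $\tilde{\psi}|_{G^{*}}=\phi$. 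One may assume the groups infinite, since otherwise $C_{0}(G)^{\perp}=\{0\}$ and there is nothing to do.

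The crucial point is to upgrade $\alpha$ to a continuous character on all of $G$, and this is exactly where the extra hypothesis enters. If $G$ is abelian, Corollary~\ref{character} produces a continuous extension $\alpha_{\ell}\colon G^{luc}\to\mathbb{T}$ of $\alpha$, which is automatically a homomorphism; I then put $\beta:=\alpha_{\ell}|_{G}$, a continuous character on $G$ whose own continuous extension to $G^{luc}$ is again $\alpha_{\ell}$. If instead $T$ is positive, then for each $p\in G^{*}$ the functional $T(\delta_{p})=\alpha(p)\,\delta_{\phi(p)}$ is positive, which forces $\alpha(p)\ge 0$ and hence, as $\abs{\alpha(p)}=1$, forces $\alpha(p)=1$; thus $\alpha$ is trivial and I take $\beta\equiv 1$ and $\alpha_{\ell}\equiv 1$. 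In either case we have a continuous character $\beta\colon G\to\mathbb{T}$ whose continuous homomorphic extension $\alpha_{\ell}\colon G^{luc}\to\mathbb{T}$ restricts to $\alpha$ on $G^{*}$.

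Next I would set $\tilde{T}:=j_{\beta,\psi}^{*}\colon LUC(G)^{*}\to LUC(H)^{*}$; because $\psi$ is a topological isomorphism, this is a weak-star continuous isometric isomorphism onto $LUC(H)^{*}$, as noted in the introduction. To see that it extends $T$ I would compute on point masses: for $p\in G^{luc}$, using that $p$ is a multiplicative functional on $LUC(G)=C(G^{luc})$, that the element $\beta$, viewed in $LUC(G)=C(G^{luc})$, is the function $\alpha_{\ell}$, and that $j_{1,\psi}^{*}(\delta_{p})=\delta_{\tilde{\psi}(p)}$, one gets $j_{\beta,\psi}^{*}(\delta_{p})=\alpha_{\ell}(p)\,\delta_{\tilde{\psi}(p)}$. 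Restricting to $p\in G^{*}$ yields $\tilde{T}(\delta_{p})=\alpha(p)\,\delta_{\phi(p)}=T(\delta_{p})$. Since $C_{0}(G)^{\perp}=M(G^{*})$ and the linear span of $\{\delta_{p}:p\in G^{*}\}$ is weak-star dense in $M(G^{*})$, while both $T$ and the restriction $\tilde{T}|_{C_{0}(G)^{\perp}}$ are weak-star continuous --- the latter because $j_{\beta,\psi}^{*}$ is weak-star continuous and $C_{0}(H)^{\perp}$ is weak-star closed in $LUC(H)^{*}$ --- the two maps agree on all of $C_{0}(G)^{\perp}$; in particular $\tilde{T}$ carries $C_{0}(G)^{\perp}$ onto $C_{0}(H)^{\perp}$. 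Uniqueness of $\tilde{T}$ among algebraic (a fortiori, among weak-star continuous isometric) isomorphism extensions of $T$ is then immediate from Proposition~\ref{unique}.

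I expect the middle step --- producing $\beta$ --- to be the real obstacle, and the reason the hypothesis cannot simply be dropped: for non-abelian $G$ there is in general no idempotent of $G^{*}$ commuting with $G$, so (cf.\ Proposition~\ref{character0}(iii) and Corollary~\ref{character}) the continuous character $\alpha$ on $G^{*}$ need not extend to a homomorphism on $G^{luc}$, and then the canonical extension $j_{\beta,\psi}^{*}$ is unavailable. Everything else --- the point-mass computation and the standard weak-star density argument --- is routine, the substantive content lying entirely in Corollary~\ref{abc}, Theorem~\ref{nice}, Corollary~\ref{character} and Proposition~\ref{unique}.
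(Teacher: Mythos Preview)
Your proof is correct and follows essentially the same route as the paper's: extract $\alpha$ and $\phi$ from the proof of Corollary~\ref{abc}, lift $\phi$ to $\psi$ via Theorem~\ref{nice}, extend $\alpha$ to a character $\beta$ on $G$ using Corollary~\ref{character} (abelian case) or positivity ($\alpha\equiv 1$), define $\tilde{T}=j_{\beta,\psi}^{*}$, and verify agreement on point masses by weak-star density, with uniqueness from Proposition~\ref{unique}. Your write-up is somewhat more explicit than the paper's (e.g.\ the computation $j_{\beta,\psi}^{*}(\delta_{p})=\alpha_{\ell}(p)\,\delta_{\tilde{\psi}(p)}$ and the handling of the finite-group edge case), but the argument is the same.
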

 \begin{proof}
The uniqueness will follow from Proposition \ref{unique}. The proof of Corollary \ref{abc} shows that there exists a continuous character $\alpha:G^*\to\mathbb{T}$ and a topological isomorphism $\phi:G^*\to H^*$ such that $$T(\delta_x)=\alpha(x)\delta_{\phi(x)}\ \ \ (x\in G^*).$$ By Theorem \ref{nice}, there exists a topological isomorphism $\psi:G\to H$ such that $\tilde{\psi}|_{G^*}=\phi$. If $G$ is abelian, by Corollary \ref{character}, there exists a unique continuous character $\alpha_G:G\to\mathbb{T}$ such that $\tilde{\alpha_G}|_{G^*}=\alpha$; if $T$ is positive, $\alpha\equiv 1_{G^*}$ and $\alpha=\widetilde{1_{G}}|_{G^*}$. As noted in the introduction, $\tilde{T}=j_{\alpha_G,\psi}^*$ is a weak-star continuous isometric isomorphism of $LUC(G)^*$ onto $LUC(H)^*$. For $x\in G^*$, weak-star continuity and density considerations give $$\tilde{T}(\delta_x)=\alpha(x)\delta_{\phi(x)}=T(\delta_x)$$ and the proof is complete. 
 \end{proof}
 
 Suppose that $T:C_0(G)^{\perp}\to C_0(H)^{\perp}$ is an isometric isomorphism (not necessarily weak-star continuous). Then for each $x\in H$, the mapping 
 $$L_x:C_0(G)^{\perp}\to C_0(G)^{\perp},\ \  \text{where}
 \ \ \ L_x(m)=T^{-1}(\delta_xT(m))$$
 is an invertible isometric left multiplier on $C_0(G)^{\perp}$ (i.e. $L_x(mn)=L_x(m)n,\ m,n\in C_0(G)^{\perp}$) with inverse $L_{x^{-1}}$. Similarly, for each $x\in H$, the mapping 
 $$R_x:C_0(G)^{\perp}\to C_0(G)^{\perp},  \ \ \ \text{where}\ \ \  R_x(m)=T^{-1}(T(m)\delta_x)$$
 is an invertible isometric right multiplier on $C_0(G)^{\perp}$. We call $L_x:C_0(G)^{\perp}\to C_0(G)^{\perp}$ a left-point multiplier and $R_x:C_0(G)^{\perp}\to C_0(G)^{\perp}$ a right-point multiplier associated with $T$. The reader is referred to \cite[Sections 1.2.1-1.2.7]{MR1270014} for definitions and basic theorems regarding the left/right multipliers. \\
  
Suppose that $T:C_0(G)^{\perp}\to C_0(H)^{\perp}$ is a weak-star continuous isometric isomorphism. Moreover, suppose that $T=\tilde{T}|_{C_0(G)^{\perp}}$, where $\tilde{T}=j_{\alpha,\psi}^*:LUC(G)^*\to LUC(H)^*$ for some character $\alpha$ on $G$ and some topological isomorphism $\psi:G\to H$. (By Theorem \ref{extension}, this is the case when $G$ is discrete and either $G$ is abelian or $T$ is positive.) Then given $x\in H$, for each $m\in C_0(G)^{\perp}$
\begin{eqnarray*}
 L_x(m)=T^{-1}(\delta_xT(m))&=&\tilde{T}^{-1}(\delta_x\tilde{T}(m))\\
 &=&\tilde{T}^{-1}(\delta_x)\tilde{T}^{-1}(\tilde{T}(m))\\
 &=&\bar{\alpha}(\psi^{-1}(x))\delta_{\psi^{-1}(x)}m.
 \end{eqnarray*}
\noindent We shall say that a multiplier $L:C_0(G)^{\perp}\to C_0(G)^{\perp}$ is \textit{given by a point-mass} if there exist $y\in G$ and $\gamma\in\mathbb{T}$ such that $L(m)=\gamma\delta_ym$, for all $m\in C_0(G)^{\perp}$. The above argument shows that left-point multipliers associated with ``canonical form" isomorphisms $T=j_{\alpha,\psi}^*$ (and their inverses) are given by point-masses. We now prove the converse of this statement. 
\begin{theorem}\label{bcd}
Let $G$ and $H$ be locally compact groups and $T:C_0(G)^{\perp}\to C_0(H)^{\perp}$ be a weak-star continuous isometric isomorphism. Suppose that the left-point multipliers associated with $T$ and $T^{-1}$ are given by point-masses. Then $T$ takes the canonical form $T = j_{\beta, \gamma}^*$ for a character $\beta$ on $G$ and a topological isomorphism $\gamma: G\to H$. In particular, $T$ extends to a topological isomorphism of $LUC(G)^*$ onto $LUC(H)^*$ and $G$ and $H$ are topologically isomorphic.  
\end{theorem}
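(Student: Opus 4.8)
The plan is to reconstruct from $T$ a topological isomorphism $\gamma\colon G\to H$ and a continuous character $\beta$ on $G$ with $T=j_{\beta,\gamma}^*|_{C_0(G)^\perp}$; then $\tilde T:=j_{\beta,\gamma}^*$ is a topological isomorphism of $LUC(G)^*$ onto $LUC(H)^*$ extending $T$, and $G$ and $H$ are topologically isomorphic via $\gamma$. Exactly as in the proof of Corollary \ref{abc}, using $C_0(G)^\perp\cong M(G^*)$ and \cite[Thm. V.8.4]{MR1070713}, the isometric algebra isomorphism $T$ carries extreme points of the unit ball to extreme points, so there are a continuous character $\alpha\colon G^*\to\mathbb T$ and a homeomorphic isomorphism $\phi\colon G^*\to H^*$ with $T(\delta_p)=\alpha(p)\delta_{\phi(p)}$; since $T$ is multiplicative and $\delta_p\delta_q=\delta_{pq}$, both $\alpha$ and $\phi$ are multiplicative. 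By hypothesis the left-point multipliers $L_x$ ($x\in H$) of $T$ and $\tilde L_w$ ($w\in G$) of $T^{-1}$ are point-masses, say $L_x(m)=\lambda(x)\delta_{\theta(x)}m$ and $\tilde L_w(n)=\beta(w)\delta_{\gamma(w)}n$; this defines maps $\lambda\colon H\to\mathbb T$, $\theta\colon H\to G$, $\beta\colon G\to\mathbb T$, $\gamma\colon G\to H$ (a point-mass $\delta_g$ is recovered from its multiplier by cancelling a right cancellable element, and such elements are dense in $G^*$ and in $H^*$ by \cite[Thm. 1]{MR1935082}). From $L_{xx'}=L_x\circ L_{x'}$, $\tilde L_{ww'}=\tilde L_w\circ\tilde L_{w'}$, the facts that $L_e$ and $\tilde L_e$ are the identity and that $T\circ L_x\circ T^{-1}$ and $T^{-1}\circ\tilde L_w\circ T$ are left multiplications by $\delta_x$ and $\delta_w$, together with repeated right cancellation, I would check that $\theta$ and $\gamma$ are group homomorphisms, $\lambda$ and $\beta$ are characters, and $\theta=\gamma^{-1}$; and, evaluating $\tilde L_w$ at $\delta_{\phi(p)}$ with $T^{-1}(\delta_{\phi(p)})=\overline{\alpha(p)}\,\delta_p$ (and $L_x$ similarly), the key identities
$$\phi(wp)=\gamma(w)\phi(p),\quad \alpha(wp)=\beta(w)\alpha(p)\quad(w\in G,\ p\in G^*),\qquad \phi(\theta(x)p)=x\phi(p)\quad(x\in H,\ p\in G^*).$$

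Next I would show that $\gamma$ is continuous, hence a topological isomorphism, being the mutual inverse of the homomorphism $\theta$, which is continuous by the symmetric argument. Fix $p_0\in G^*$ with $q_0:=\phi(p_0)$ right cancellable in $H^{luc}$ (possible because $\phi$ maps $G^*$ onto $H^*$ and right cancellable elements are dense in $H^*$). If $w_i\to w$ in $G$, then $w_ip_0\to wp_0$ in $G^{luc}$, since $\xi\mapsto\xi p_0$ is weak-star continuous (as $\langle\xi p_0,f\rangle=\langle\xi,p_0f\rangle$ and $p_0f\in LUC(G)$); all these points lie in the ideal $G^*$, so $\phi(w_ip_0)\to\phi(wp_0)$, that is, $\delta_{\gamma(w_i)}\,\delta_{q_0}\to\delta_{\gamma(w)}\,\delta_{q_0}$ weak-star in $LUC(H)^*$. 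As $(\delta_{\gamma(w_i)})$ is norm-bounded, Lemma \ref{two parts}(i)--(ii) gives $\delta_{\gamma(w_i)}\to\delta_{\gamma(w)}$ weak-star, hence $\gamma(w_i)\to\gamma(w)$ in $H$ because $G$ embeds homeomorphically in $G^{luc}$. Moreover $\beta$ is continuous, since $\beta(w)=\overline{\alpha(p_0)}\,\alpha(wp_0)$. Let $\tilde\gamma\colon G^{luc}\to H^{luc}$ and $\tilde\beta\colon G^{luc}\to\mathbb T$ be the canonical continuous homomorphic extensions of $\gamma$ and $\beta$, the former a topological isomorphism of $G^{luc}$ onto $H^{luc}$.

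It remains to identify $\phi$ with $\tilde\gamma|_{G^*}$ and $\alpha$ with $\tilde\beta|_{G^*}$. For a fixed $p\in G^*$, the maps $\xi\mapsto\phi(\xi p)$ and $\xi\mapsto\tilde\gamma(\xi)\phi(p)$ are continuous on $G^{luc}$ (using that $G^*$ and $H^*$ are ideals and that right translations are weak-star continuous) and agree on the dense set $G$ by the first key identity, hence agree on all of $G^{luc}$; taking $\xi=p'\in G^*$ and using $\phi(p'p)=\phi(p')\phi(p)$ gives $\phi(p')\phi(p)=\tilde\gamma(p')\phi(p)$, and cancelling a right cancellable $\phi(p)$ yields $\phi=\tilde\gamma|_{G^*}$. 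Likewise the scalar identity gives $\alpha(p')\alpha(p)=\tilde\beta(p')\alpha(p)$, so $\alpha=\tilde\beta|_{G^*}$. Now $j_{\beta,\gamma}^*\colon LUC(G)^*\to LUC(H)^*$ is a weak-star continuous isometric isomorphism, and for every $p\in G^*$ one has $j_{\beta,\gamma}^*(\delta_p)=\tilde\beta(p)\delta_{\tilde\gamma(p)}=\alpha(p)\delta_{\phi(p)}=T(\delta_p)$. Since the linear span of $\{\delta_p:p\in G^*\}$ is weak-star dense in $M(G^*)=C_0(G)^\perp$ and both maps are weak-star continuous, $T=j_{\beta,\gamma}^*|_{C_0(G)^\perp}$; thus $\tilde T:=j_{\beta,\gamma}^*$ is a topological isomorphism of $LUC(G)^*$ onto $LUC(H)^*$ extending $T$, and $\gamma\colon G\to H$ is a topological isomorphism.

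The main obstacle is the passage, in the last two paragraphs, from the ``$w\in G$'' identities to their ``$\xi\in G^{luc}$'' versions and the subsequent cancellation: this requires knowing that $\gamma$ and $\beta$ are continuous (so that $\tilde\gamma,\tilde\beta$ exist), that $G$ is dense in $G^{luc}$, that $\phi$ maps onto all of $H^*$ (so a right cancellable element lies in its range), and that $\phi$ is multiplicative. It is precisely the hypothesis on $T^{-1}$, rather than on $T$ alone, that produces the homomorphism $\gamma$ in the correct direction $G\to H$; the hypothesis on $T$ only yields $\theta\colon H\to G$, which by itself does not give a topological isomorphism.
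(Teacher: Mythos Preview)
Your argument is correct and follows the same overall strategy as the paper: extract from the point-mass multiplier hypothesis group homomorphisms $\gamma\colon G\to H$ and $\theta\colon H\to G$, show they are continuous via Lemma~\ref{two parts} applied at a right cancellable element, verify they are mutual inverses, and then identify $T$ with $j_{\beta,\gamma}^*$ on $C_0(G)^\perp$ by a density/continuity argument.

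The one genuine difference is that you begin by invoking the extreme-point description from Corollary~\ref{abc} to produce the $G^*$-level isomorphism $\phi$ and character $\alpha$, and then relate $\gamma$ to $\phi$ through the identity $\phi(wp)=\gamma(w)\phi(p)$. The paper never passes through $\phi$ and $\alpha$ at all: it works entirely with the multiplier data $(\psi,\alpha)$ coming from $T$ and $(\gamma,\beta)$ from $T^{-1}$, and recovers $T^{-1}(\delta_p)$ for $p\in H^*$ directly as the weak-star limit $\lim_i L_{x_i}(m)$ along a net $x_i\to p$ in $H$. This is slightly more economical (no appeal to \cite[Thm.~V.8.4]{MR1070713}), while your route makes the semigroup-level picture more explicit and separates cleanly the ``$T$ on $G^*$'' part from the ``$T$ determines a map $G\to H$'' part. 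Both reach the same canonical form $T=j_{\beta,\gamma}^*|_{C_0(G)^\perp}$.
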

\begin{proof}
For each $x\in H$, let $\psi(x)\in G$ and $\alpha(x)\in\mathbb{T}$ be such that $L_x(m)=\alpha(x)\delta_{\psi(x)}m$, for all $m\in C_0(G)^{\perp}$. Suppose that $z\in G^*$ is right cancellable in $LUC(G)^*$. First we show that both $\alpha$ and $\psi$ are multiplicative. To see this, let $x,y\in H$. Then, we have that 
\begin{eqnarray*}
\alpha(xy)\delta_{\psi(xy)}\delta_z=L_{xy}(\delta_z)&=&T^{-1}(\delta_{xy}T(\delta_z))\\
&=&T^{-1}[\delta_xT\left(T^{-1}(\delta_yT(\delta_z))\right)]\\
&=&\alpha(x)\delta_{\psi(x)}T^{-1}(\delta_yT(\delta_z))\\
&=&\alpha(x)\alpha(y)\delta_{\psi(x)}\delta_{\psi(y)}\delta_z=\alpha(x)\alpha(y)\delta_{\psi(x)\psi(y)z},
\end{eqnarray*}
so $\alpha(xy)=\alpha(x)\alpha(y)$ and $\psi(xy)=\psi(x)\psi(y)$. Now we show that $\psi:H\to G$ is continuous. Suppose that $x_{\gamma}\to x$ in $H$. Since $T$ (and therefore $T^{-1}$) is weak-star continuous, we have that $L_{x_{\gamma}}(\delta_z)\to L_x(\delta_z)$ and so $\alpha(x_{\gamma})\delta_{\psi(x_{\gamma})}\delta_z\xrightarrow{w^*} \alpha(x)\delta_{\psi(x)}\delta_z$ in $C_0(G)^{\perp}$. Evaluating at the constant function $1$ in $LUC(G)$, we see that $\alpha(x_{\gamma})\to \alpha(x)$, and therefore $\delta_{\psi(x_{\gamma})}\delta_z\xrightarrow{w^*} \delta_{\psi(x)}\delta_z$ in $C_0(G)^{\perp}$. It follows from Lemma \ref{two parts} that $\delta_{\psi(x_{\gamma})}\xrightarrow{w^*}\delta_{\psi(x)}$. Thus, $\psi$ is continuous. To see that $\psi$ is a bijection, we note that given $y\in G$, $\gamma(y)\in H$ and $\beta(y)\in\mathbb{T}$ are such that 
$$\beta(y)\delta_{\gamma(y)}m=T(\delta_yT^{-1}(m))\ \ \ (m\in C_0(H)^{\perp}),$$ then the above argument shows that $\gamma$ and $\beta$ are also continuous maps. Letting $x\in G$ and $m=T^{-1}(\delta_z)\in C_0(G)^{\perp}$, we obtain 
\begin{eqnarray*}
\beta(\psi(x))\delta_{\gamma(\psi(x))z}=\beta(\psi(x))\delta_{\gamma(\psi(x))}\delta_z
&=&T(\delta_{\psi(x)}T^{-1}(\delta_z))\\
&=&T\left(\overline{\alpha(x)}\alpha(x)\delta_{\psi(x)}m\right)\\
&=&\overline{\alpha(x)}T(L_x(m))\\
&=&\overline{\alpha(x)}T\left( T^{-1}(\delta_xT(m))\right)\\
&=& \overline{\alpha(x)}\delta_x\delta_z=\overline{\alpha(x)}\delta_{xz}.
\end{eqnarray*}
 Hence $\gamma=\psi^{-1}$ (and $\beta\circ\psi=\bar{\alpha}$; equivalently $\beta=\bar{\alpha}\circ\psi^{-1}$). Now we show that $T$ has the desired canonical form. To see this, suppose that $p\in H^{luc}$. Let $(x_i)$ be a net in $H$ converging to $p$. We have that
\begin{eqnarray*}
\tilde{\alpha}(p)\delta_{\tilde{\psi}(p)}m&=&\lim_i\alpha(x_i)\delta_{\psi(x_i)}m\\
&=&\lim_iT^{-1}(\delta_{x_i}T(m))\\
&=&T^{-1}(\delta_pT(m))=T^{-1}(\delta_p)m,   
\end{eqnarray*}
for all $m\in C_0(G)^{\perp}$. Taking $m=z\in G^*$, a right cancellable element in $G^{luc}$, we have that $T^{-1}(\delta_p)=\tilde{\alpha}(p)\delta_{\tilde{\psi}(p)}$. So $T^{-1}|_{G^*}=j_{\alpha,\psi}^*|_{G^*}$ and, since $T^{-1}$ is weak-star continuous, we have that  $T^{-1}=j_{\alpha,\psi}^*|_{C_0(G)^{\perp}}$. It follows that $T$ has the canonical form $\displaystyle{T=j_{\bar{\alpha}\circ\psi^{-1},\psi^{-1}}^*|_{C_0(G)^{\perp}}=j_{\beta,\gamma}^*|_{C_0(G)^{\perp}}}$, and therefore extends to $LUC(G)^*$.
\end{proof}
It can be shown that if $G$ and $H$ are locally compact groups then any isometric (algebra) isomorphism $T:M(G)\to M(H)$ is weak-star continuous. It is thus natural to ask if every isometric isomorphism $T:LUC(G)^*\to LUC(H)^*$, equivalently $T:M(G^{luc})\to M(H^{luc})$, is also weak-star continuous. In fact, this question was the motivation for this paper. We have shown that any such isometric isomorphism maps $C_0(G)^{\perp}$ onto $C_0(H)^{\perp}$, so a related question is if every isometric isomorphism $T:C_0(G)^{\perp}\to C_0(H)^{\perp}$ is weak-star continuous. Similar questions can be asked about algebraic isomorphisms $\phi:G^{luc}\to H^{luc}$ and $\varphi:G^{*}\to H^{*}$.\\ 

We conclude with a discussion of the multipliers on $C_0(G)^{\perp}$, which may be useful with regards to the problems described above and which we think is of independent interest. Suppose that $G$ is a locally compact group. Since $LUC(G)^*$ is unital, a simple observation shows that every left multiplier $L$ is of the form $L(n)=m_0 n$:  
 $$L(n)=L(\delta_e n)=L(\delta_e)n.$$
 So, since $LUC(G)^*$ is unital, it is easy to characterize its left multipliers. The left multiplier $L$ is weak-star continuous if and only if $m_0\in M(G)$ (see \cite[Corollary 3]{MR817669}). Also, $L:LUC(G)^*\to LUC(G)^*$ is an invertible isometric left multiplier if and only if $m_0=\alpha\delta_x$, $\alpha\in\mathbb{T}$, $x\in G$ (see \cite[Corollary 1.2]{MR1005079}). So it is easy to characterize both the weak-star continuous and invertible isometric left multipliers on $LUC(G)^*$. Therefore, if we could show that every left-point multiplier on $C_0(G)^{\perp}$ associated with a weak-star continuous isometric isomorphism $T:C_0(G)^{\perp}\to C_0(G)^{\perp}$ extends to an isometric invertible left multiplier on $LUC(G)^*$, then every left-point multiplier on $C_0(G)^{\perp}$ must be given by a point-mass; by Theorem \ref{bcd} we could then conclude that $C_0(G)^{\perp}$ determines $G$. Similarly, every right multiplier $R$ on $LUC(G)^*$ is of the form $R(n)=nm_0$, for some $m_0\in LUC(G)^*$, and therefore, every right multiplier on $LUC(G)^*$ is weak-star continuous. As shown in Proposition \ref{continuous} below, the right-point multipliers $R_x$ associated with $T:C_0(G)^{\perp}\to C_0(H)^{\perp}$ (as defined above) are also always weak-star continuous  (without assuming that $T$ is weak-star continuous).  \\

 When $T:C_0(G)^{\perp}\to C_0(H)^{\perp}$ is an isomorphism and $x\in H$, it is readily verified that the pair $(L_x,R_x)$ is a double centralizer of $C_0(G)^{\perp}$ (i.e. $mL_x(n)=R_x(m)n$ for all $m,n\in C_0(G)^{\perp}$). We note that if $(L,R)$ is a double centralizer of $C_0(G)^{\perp}$ such that $L(n)=m_0n$, for some $m_0\in  LUC(G)^*$, then using a right cancellable element $z\in C_0(G)^{\perp}$ - see Lemma \ref{two parts} - we have that $R(n)=nm_0$:
 $$R(n)z=nL(z)=nm_0z.$$

 \begin{prop}\label{continuous}
Suppose that $G$ and $H$ are locally compact groups and $T:C_0(G)^{\perp}\to C_0(H)^{\perp}$ is an isometric isomorphism. Then for each $x\in H$, the isometric right multiplier $R_x(m)=T^{-1}(T(m)\delta_x)$ is weak-star continuous.
\end{prop}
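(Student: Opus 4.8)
The plan is to move the ``right multiplication by $\delta_x$'' that is hidden inside the map $T$ (which is not assumed weak-star continuous) out to an honest right multiplication in $LUC(G)^*$, using the double centralizer relation between $L_x$ and $R_x$, and then to strip off that multiplication by means of a right cancellable element and Lemma \ref{two parts}.

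Since $R_x$ is an isometry, it is a bounded linear operator on the dual Banach space $C_0(G)^{\perp}=(LUC(G)/C_0(G))^*$, and because $C_0(G)^{\perp}$ is a weak-star closed subspace of $LUC(G)^*$ its intrinsic weak-star topology coincides with the relative weak-star topology inherited from $LUC(G)^*$. By the Banach--Dieudonn\'{e} theorem it therefore suffices to prove that $R_x$ is weak-star continuous on the closed unit ball of $C_0(G)^{\perp}$; that is, it suffices to show $R_x(m_i)\xrightarrow{w^*}R_x(m)$ whenever $(m_i)$ is a net in $C_0(G)^{\perp}$ with $\sup_i\|m_i\|\leq 1$ and $m_i\xrightarrow{w^*}m$.

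Fix such a net, and choose a right cancellable element $z\in G^*$ of $G^{luc}$ (these exist by \cite[Thm. 1]{MR1935082}); by Lemma \ref{two parts}$(i)$, $z$ is also right cancellable in $LUC(G)^*$. Recall that $z\in C_0(G)^{\perp}$ and that $(L_x,R_x)$ is a double centralizer of $C_0(G)^{\perp}$, so that
\[
R_x(m_i)z=m_iL_x(z)\quad\text{for all }i.
\]
As $L_x(z)$ is a fixed element of $LUC(G)^*$ and right multiplication by a fixed element is weak-star continuous on $LUC(G)^*$, we get $m_iL_x(z)\xrightarrow{w^*}mL_x(z)=R_x(m)z$, i.e. $R_x(m_i)z\xrightarrow{w^*}R_x(m)z$. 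Since $R_x$ is an isometry the net $(R_x(m_i))$ is bounded, so Lemma \ref{two parts}$(ii)$, applied with the right cancellable element $z$, gives $R_x(m_i)\xrightarrow{w^*}R_x(m)$, which completes the argument.

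The only non-formal step is the first one: since $T$ need not be weak-star continuous, one cannot simply conjugate the weak-star continuous operator $n\mapsto n\delta_x$ by $T$ to obtain $R_x$. The double centralizer identity $mL_x(n)=R_x(m)n$ is precisely what allows $R_x(m_i)z$ to be rewritten without any reference to $T$; everything after that is weak-star continuity of right multiplication together with the right cancellable machinery of Lemma \ref{two parts}. In particular, and in contrast with Corollary \ref{abc} and Theorem \ref{extension}, this argument makes no use of weak-star continuity of $T$.
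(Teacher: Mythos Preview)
Your proof is correct and follows essentially the same route as the paper: reduce to bounded nets on the unit ball, use the double centralizer identity $R_x(m_i)z=m_iL_x(z)$ with a right cancellable $z\in G^*$ to replace the non-weak-star-continuous $T$ by an honest right multiplication, and then apply Lemma~\ref{two parts}(ii). The only difference is that you give more explicit justification for the reduction to the unit ball (Banach--Dieudonn\'e) and for the weak-star continuity of $m\mapsto mL_x(z)$, both of which the paper leaves implicit.
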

\begin{proof}
It is enough to show that $R_x$ is weak-star continuous on the unit ball of $C_0(G)^{\perp}$. Suppose that $n_i\xrightarrow{w^*} n$ in $C_0(G)^{\perp}$, with $(n_i)$ bounded in norm by 1. Clearly $(R_x(n_i))$ is also bounded by $1$. Suppose that $z\in G^*$, is right cancellable in $G^{luc}$. We have that 
\begin{align*}
 \nonumber R_x(n_i)\delta_z&=n_iL_x(\delta_z) \\
   \nonumber &  \xrightarrow{w^*} n L_x(\delta_z)= R_x(n)\delta_z.
     \end{align*}
     By Lemma \ref{two parts}, $R_x(n_i)\to R_x(n)$ weak-star in $C_0(G)^{\perp}$, as needed.
\end{proof}
 
 Another related interesting question is whether there is a characterization for the multipliers on $C_0(G)^{\perp}$. Unlike, $LUC(G)^*$, $C_0(G)^{\perp}$ does not possess a unit or even an approximate identity. In fact, it is not hard to see that the annihilator of $C_0(G)^{\perp}$ in $LUC(G)^*$ is zero. Noting that $C_0(G)^{\perp}$ is a closed ideal in $LUC(G)^*$, there is a natural embedding of $LUC(G)^*$ into $\mathcal{D}(C_0(G)^{\perp})$, the double centralizer algebra of $C_0(G)^{\perp}$. Does  $LUC(G)^*=\mathcal{D}(C_0(G)^{\perp})$? Is there any nice characterization for $\mathcal{D}(C_0(G)^{\perp})$? 
\end{section}
\section*{Acknowledgement} 
I would like to express my gratitude to my PhD advisor, Ross Stokke, for suggesting the problem that motivated the results in this article. Also, I would like to thank my PhD advisors, Fereidoun Ghahramani and Ross Stokke, for their valuable comments.

 \nocite{*}
\bibliographystyle{alpha}

\bibliography{mybib}
\end{document}